\newtheorem{theorem}{Theorem}
\newtheorem{lemma}[theorem]{Lemma}
\newtheorem{proposition}[theorem]{Proposition}
\theoremstyle{definition}
\newtheorem{definition}{Definition}
\newtheorem{assumption}{Assumption}
\newtheorem{example}{Example}
\newtheorem*{remark}{Remark}
\newacronym{acr:cvar}{CVaR}{conditional value-at-risk}
\newacronym{acr:dro}{DRO}{distributionally robust optimization}
\newacronym{acr:kl}{KL}{Kullback-Leibler}
\newacronym{acr:tv}{TV}{Total Variation}
\newacronym{acr:mmd}{MMD}{Maximum Mean Discrepancy}
\newacronym{acr:ols}{OLS}{ordinary least squares}
\newacronym{acr:wrls}{WRLS}{weighted and ridge-regularized least squares}
\newacronym{acr:ot}{OT}{Optimal Transport}
\newcommand{\reals}{\mathbb{R}}
\newcommand{\nonnegativeReals}{\mathbb{R}_{\geq 0}}
\newcommand{\Id}{\mathrm{Id}}
\renewcommand{\d}{\mathrm{d}}
\DeclareMathOperator*{\range}{Range}
\DeclareMathOperator*{\kernel}{Ker}
\newcommand{\eye}{I}
\newcommand{\transpose}[1]{#1^\top}
\newcommand{\T}[1]{\transpose{#1}}
\newcommand{\inverse}[1]{#1^{-1}}
\newcommand{\inv}[1]{\inverse{#1}}
\newcommand{\pseudoinverse}[1]{#1^\dagger}
\newcommand{\pinv}[1]{\pseudoinverse{#1}}
\newcommand{\sample}[2]{\widehat{#1}^{(#2)}}
\newcommand{\entry}[2]{#1_{#2}}
\newcommand{\Q}{\mathbb{Q}}
\renewcommand{\P}{\mathbb{P}}
\newcommand{\empiricalP}[1]{\widehat\P_{#1}}
\newcommand{\empiricalQ}[1]{\widehat\Q_{#1}}
\newcommand{\expectedValue}[2]{\mathbb{E}_{#1}\left[#2\right]}
\newcommand{\diracDelta}[1]{\delta_{#1}}
\newcommand{\tensorProd}[2]{#1\times #2}
\newcommand{\setPlans}[2]{\Gamma(#1,#2)}
\newcommand{\probSpace}[1]{\mathcal{P}(#1)}
\newcommand{\pushforward}[1]{{#1}_{\#}}
\newcommand{\transportCost}[3]{W^{#1}(#2,#3)}
\newcommand{\ball}[3]{\mathbb{B}_{#1}^{#2}(#3)}
\DeclareMathOperator*{\st}{s.t.}
\newcommand{\DS}{\displaystyle}
\title{\LARGE \bf Capture, Propagate, and Control Distributional Uncertainty}
\author{Liviu Aolaritei, Nicolas Lanzetti, and Florian D\"orfler
\thanks{This work was supported by the Swiss National Science Foundation under NCCR Automation, grant agreement 51NF40\_180545.}
\thanks{The authors are with the Automatic Control Laboratory, Department of Electrical Engineering and Information Technology at ETH Z\"urich, Switzerland, {\tt \{aliviu,lnicolas,dorfler\}@ethz.ch}.}%
}
\begin{document}

\maketitle
\thispagestyle{empty}
\pagestyle{empty}

\IEEEpeerreviewmaketitle

\begin{abstract}
We study stochastic dynamical systems in settings where only partial statistical information about the noise is available, e.g., in the form of a limited number of noise realizations. Such systems are particularly challenging to analyze and control, primarily due to an absence of a distributional uncertainty model which: (1) is expressive enough to capture practically relevant scenarios; (2) can be easily propagated through system maps; (3) is invariant under propagation; and (4) allows for computationally tractable control actions. In this paper, we propose to model distributional uncertainty via Optimal Transport ambiguity sets and show that such modeling choice satisfies all of the above requirements. We then specialize our results to stochastic LTI systems, and start by showing that the distributional uncertainty can be efficiently captured, with high probability, within an Optimal Transport ambiguity set on the space of noise trajectories. Then, we show that such ambiguity sets propagate exactly through the system dynamics, giving rise to stochastic tubes that contain, with high probability, all trajectories of the stochastic system. Finally, we show that the control task is very interpretable, unveiling an interesting decomposition between the roles of the feedforward and the feedback control terms. Our results are actionable and successfully applied in stochastic reachability analysis and in trajectory planning under distributional uncertainty.
\end{abstract}

\section{Introduction}
\label{sec:intro}

In the era of data science, it is increasingly common to encounter stochastic (dynamical) systems for which only partial statistical information on the noise is available (e.g., samples). We are therefore confronted with so-called \emph{distributional uncertainty}, whereby not only is the system affected by noise but also the underlying noise probability distribution is unknown and only partially observable. 

In Operation Research and Machine Learning, Wasserstein ambiguity sets have emerged as a prominent model for distributional uncertainty. These are balls in the probability space defined in terms of the Wasserstein distance~\cite{Villani2009a}, a distance between probability distributions and centered at a reference distribution $\empiricalP{}$. Examples of applications are distributionally robust optimization~\cite{Peyman2018,shafieezadeh2023new}, regression and classification~\cite{shafieezadeh2019regularization,ho2020adversarial,aolaritei2022performance}, adversarial training~\cite{wong2019wasserstein}, etc. 

More recently, Wasserstein ambiguity sets, and more generally optimal transport, penetrated the control community, with application in uncertainty quantification in dynamical systems~\cite{aolaritei2022uncertainty, boskos2020data}, model predictive control~\cite{mark2020stochastic,coulson2021distributionally}, distribution steering~\cite{chen2021optimal}, optimal control~\cite{yang2020wasserstein}, multi-agent stochastic optimization~\cite{cherukuri2022data,cherukuri2019cooperative}, linear quadratic differential games~\cite{adu2022optimal}, probability/multi-agent control~\cite{terpin2023dynamic}, and filtering~\cite{singh2020inference,taghvaei2021optimal,krishnan2020probabilistic,shafieezadeh2018wasserstein}, to name a few.
In this paper, we demonstrate that \emph{Optimal Transport (OT) ambiguity sets}, which encompass Wasserstein ambiguity sets, are also easy to propagate. This makes them very natural to model distributional uncertainty in the context of dynamical systems, enjoying the following three desirable features: 
\begin{itemize}
    \item \textit{Expressivity}. They are rich enough to capture relevant stochasticity of real-world systems; see~\cref{sec:OT}.

    \item \textit{Propagation}. They are easily and exactly propagated through linear maps and they are \emph{invariant} under the propagation (i.e., the result of the propagation is itself an OT ambiguity set); see~\cref{sec:propagation}.

    \item \textit{Computation}. They allow for computationally tractable decision-making (e.g., computing worst-case risk over the uncertainty set); see Sections~\ref{sec:OT} and \ref{sec:applications}.
\end{itemize}

More specifically, our contributions are threefold. First, we study the propagation of OT ambiguity sets through linear maps and show that the propagation of an OT ambiguity set is itself an OT ambiguity set or can be tightly upper bounded by an OT ambiguity set.
Second, we instantiate our results in the context of stochastic linear time-invariant systems and unveil a decomposition between the roles of the feedforward and feedback control terms. Finally, we deploy our results 
for stochastic reachability analysis and trajectory planning under distributional uncertainty. Among others, we demonstrate that OT ambiguity sets enable the data-driven robust design of reachability sets and feedforward input trajectories. This way, decisions perform well for the true (but unknown) probability distribution governing the noise even if the decision-maker only disposes of a few samples.

This theory is successfully exploited in the concomitant paper \cite{aolaritei2023wasserstein} to formulate a Wasserstein Tube MPC capable of optimally trading between safety and performance.

\subsection{Mathematical Preliminaries and Notation}
\label{subsec:math}

Throughout the paper, $\mathcal P(\reals^d)$ denotes the space of probability distributions over $\reals^d$. Given $\P, \Q \in \mathcal P(\reals^d)$, we denote by $\P \otimes \Q$ their product distribution and by $\P^{\otimes t}$ the $t$-fold product distribution $\P \otimes \ldots \otimes \P$ with $t$ terms. The delta probability distribution at $x\in\reals^d$ is denoted by $\diracDelta{x}$. We focus on two classes of transformations of probability distributions: \emph{pushforward} via a linear transformation and the \emph{convolution} with a delta distribution. We start with the pushforward:

\begin{definition}
\label{def:pushforwad}
Let $\P\in\probSpace{\reals^d}$ and $A  \in \reals^{m \times d}$. The pushforward of $\P$ via the linear map $x\mapsto Ax$ is denoted by $\pushforward{A}\P$, and is defined by $(\pushforward{A}\P)(\mathcal B)\coloneqq \P(A^{-1}(\mathcal B))$, for all Borel sets $\mathcal B\subset\reals^m$.
\end{definition}

Equivalently, \cref{def:pushforwad} says that if $x\sim\P$, then $\pushforward{A}\P$ is the probability distribution of the random variable $y=Ax$.

\begin{example}
\label{ex:pushforwad:empirical}
Let $\empiricalP{}=\frac{1}{n}\sum_{i=1}^n\delta_{\sample{x}{i}}$ be an empirical distribution. Then, $\pushforward{A}\empiricalP{}=\frac{1}{n}\sum_{i=1}^n\delta_{A \sample{x}{i}}$ is empirical as well, supported on the propagated samples.
\end{example}

Moreover, given $x \sim \P$ on $\reals^d$ and $y \in \reals^d$, $x+y$ is distributed according to the convolution $\delta_y\ast\P$ defined below.

\begin{definition}
\label{def:convolution}
Let $\P\in\probSpace{\reals^d}$ and $y \in \reals^d$. Then, the convolution of $\P$ and $\delta_y$ is denoted by $\delta_y \ast \P$, and is defined by $(\delta_y \ast \P)(\mathcal A) = \P(\mathcal A - y)$, for all Borel sets $\mathcal A\subset\reals^d$.
\end{definition}

Finally, we are interested in probabilistic constraints based on the \emph{conditional value-at-risk (CVaR)}. Given $f:\reals^d \to \reals$ and a random variable $x\sim\Q$ on $\reals^d$, the CVaR of $f(x)$ at probability level $1-\alpha$ is defined as
\begin{align}
\label{eq:CVaR}
    \text{CVaR}_{1-\gamma}^{\Q}(f(x)) = \inf_{\tau \in \reals}\; \tau + \frac{1}{\gamma} \mathbb E_{\mathbb Q} \left[ \max\{0,f(x)-\tau\} \right].
\end{align}


\section{Capture Distributional Uncertainty}
\label{sec:OT}

We start by formalizing the notion of OT ambiguity sets and exposing their expressivity and their geometric, statistical, and computational features and guarantees. 


\subsection{Definition of Optimal Transport ambiguity sets}
\label{subsec:definition:OT}

Consider a non-negative lower semi-continuous function $c:\reals^d\to\nonnegativeReals$ (henceforth, referred to as \emph{transportation cost}) and two probability distributions $\P,\Q\in\probSpace{\reals^d}$. Then, the \emph{OT discrepancy} between $\P$ and $\Q$ is defined by  
\begin{equation}\label{eq:otcost}
    \transportCost{c}{\P}{\Q}
    \coloneqq 
    \inf_{\gamma\in\setPlans{\P}{\Q}}\int_{\reals^d\times \reals^d}c(x_1-x_2)\d\gamma(x_1,x_2),
\end{equation}
where $\setPlans{\P}{\Q}$ is the set of all probability distributions over $\mathbb R^d\times\mathbb R^d$ with marginals $\P$ and $\Q$, often called \emph{transport plans} or \emph{couplings}~\cite{Villani2009a}. 
The semantics are as follows: we seek the minimum cost to transport the probability distribution $\P$ onto the probability distribution $\Q$ when transporting a unit of mass from $x_1$ to $x_2$ costs $c(x_1-x_2)$. 
Intuitively, $\transportCost{c}{\P}{\Q}$ quantifies the discrepancy between $\P$ and $\Q$ 
and it naturally provides us with a definition of ambiguity in the space of probability distributions. In particular, the \emph{OT ambiguity set} of radius $\varepsilon$ centered at $\P$  is defined by 
\begin{equation}\label{eq:otambiguityset}
    \ball{\varepsilon}{c}{\P}
    \coloneqq 
    \{\Q\in\probSpace{\reals^d}: \transportCost{c}{\P}{\Q}\leq\varepsilon\}
    \subset\probSpace{\reals^d}.
\end{equation}
In words, $\ball{\varepsilon}{c}{\P}$ includes all probability distributions onto which $\P$ can be transported with a budget of at most $\varepsilon$.


\subsection{Properties of Optimal Transport ambiguity sets}
\label{subsec:properties:OT}

OT ambiguity sets are attractive to capture distributional uncertainty for various reasons, which we detail next.

\paragraph{Expressivity}
OT ambiguity sets are highly expressive: they contain both continuous and discrete distributions, distributions not concentrated on the support of $\P$, and even distributions whose mass asymptotically escapes to infinity:
\begin{example}
Let $c(x_1-x_2)=|x_1-x_2|^2$ on $\reals$, $\varepsilon>0$, and let $\Q$ be the Gaussian distribution with mean $0$ and variance $\varepsilon$. Then, $\transportCost{c}{\diracDelta{0}}{\Q} = \mathbb E_{\Q}\left[|x|^2  \right]=\varepsilon$. Moreover, $\transportCost{c}{\diracDelta{0}}{\diracDelta{\sqrt{\varepsilon}}}=\varepsilon$, and $\transportCost{c}{\diracDelta{0}}{{\varepsilon}{\frac{1}{n^2}}\diracDelta{n}+(1-{\varepsilon}{\frac{1}{n^2}})\diracDelta{0}}=\varepsilon$.
\end{example}

These properties cease to hold if the discrepancy between probability distributions is measured via the Kullback-Leibler (KL) divergence or Total Variation (TV) distance \cite{gibbs2002choosing}. 

\paragraph{Geometric properties}
OT ambiguity sets encapsulate the geometry that the transportation cost $c$ induces on $\reals^d$; e.g., if $x_0, x_1$ satisfy $c(x_0-x_1)\leq\varepsilon$, then $\delta_{x_1}\in\ball{\varepsilon}{c}{\delta_{x_0}}$. Moreover, \eqref{eq:otcost}  and \eqref{eq:otambiguityset} readily show that ambiguity sets are well-behaved under monotone changes in $c$ and $\varepsilon$:
\begin{lemma}
\label{lemma:ambiguitysetmonotone}
Let $\P\in\probSpace{\reals^d}$, $c, c_1, c_2$ be transportation costs over $\reals^d$, and $\varepsilon,\varepsilon_1, \varepsilon_2>0$. Then,
\begin{itemize}
    \item[(i)] if $\varepsilon_1\leq \varepsilon_2$ then $\ball{\varepsilon_1}{c}{\P}\subseteq\ball{\varepsilon_2}{c}{\P}$;
    \item[(ii)] if $c_1\leq c_2$, then $\ball{\varepsilon}{c_2}{\P}\subseteq\ball{\varepsilon}{c_1}{\P}$.
\end{itemize}
\end{lemma}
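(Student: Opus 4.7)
The plan is to prove both inclusions directly from the definitions in \eqref{eq:otcost} and \eqref{eq:otambiguityset}, since the lemma is essentially a monotonicity bookkeeping statement and no deep machinery is needed. In both cases I will fix an arbitrary $\Q$ in the smaller set and show it lies in the larger one by comparing values of $\transportCost{c}{\P}{\Q}$.

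For (i), I would take $\Q\in\ball{\varepsilon_1}{c}{\P}$, which by definition means $\transportCost{c}{\P}{\Q}\leq\varepsilon_1$. Chaining this with the hypothesis $\varepsilon_1\leq\varepsilon_2$ yields $\transportCost{c}{\P}{\Q}\leq\varepsilon_2$, i.e., $\Q\in\ball{\varepsilon_2}{c}{\P}$. This step is immediate and requires no manipulation of couplings.

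For (ii), the key observation is monotonicity of the OT discrepancy in the cost function. Given $c_1\leq c_2$ pointwise on $\reals^d$, for any coupling $\gamma\in\setPlans{\P}{\Q}$ one has $\int c_1(x_1-x_2)\d\gamma(x_1,x_2)\leq \int c_2(x_1-x_2)\d\gamma(x_1,x_2)$. Taking the infimum over $\gamma\in\setPlans{\P}{\Q}$ on both sides (which is legitimate because the inequality holds for every admissible $\gamma$) yields $\transportCost{c_1}{\P}{\Q}\leq \transportCost{c_2}{\P}{\Q}$. Hence any $\Q\in\ball{\varepsilon}{c_2}{\P}$ satisfies $\transportCost{c_1}{\P}{\Q}\leq \transportCost{c_2}{\P}{\Q}\leq\varepsilon$, so $\Q\in\ball{\varepsilon}{c_1}{\P}$.

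There is no real obstacle here; the only minor point worth being explicit about is that the infimum in \eqref{eq:otcost} is taken over the \emph{same} set $\setPlans{\P}{\Q}$ regardless of the cost, so the comparison step in (ii) is clean. Both statements are best presented as two short one-line arguments.
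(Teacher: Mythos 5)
Your proof is correct and matches the paper's own (implicit) argument: the paper simply notes that \eqref{eq:otcost} and \eqref{eq:otambiguityset} ``readily show'' the lemma, and your two one-line verifications --- chaining $\transportCost{c}{\P}{\Q}\leq\varepsilon_1\leq\varepsilon_2$ for (i), and comparing integrals coupling-by-coupling before taking the infimum over the common set $\setPlans{\P}{\Q}$ for (ii) --- are exactly the intended elementary derivation. Nothing is missing.
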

In words, an increase in the transportation cost shrinks the ambiguity set, whereas an increase of the radius enlarges it. These simple observations arm practitioners with actionable knobs to control the level of distributional uncertainty.

\paragraph{Statistical properties}
In most applications, probability distributions are not directly observable and must be estimated from data. Specifically, suppose one has access to $n$ i.i.d. samples $\{\sample{x}{i}\}_{i=1}^n$ from $\P$, and constructs the \emph{empirical probability distribution} $\empiricalP{} \coloneqq \frac{1}{n}\sum_{i=1}^n\diracDelta{\widehat{x}^{(i)}}$. 
A straightforward generalization of~\cite[Theorem 2]{Fournier2015} stipulates that if $c(x_1-x_2) \leq \norm{x_1-x_2}^p$ for some $p\geq 1$ and the true distribution $\P$ is light-tailed, then $\P \in \ball{\varepsilon}{c}{\widehat{\P}}$ with high probability, provided that the radius $\varepsilon$ is carefully chosen.

\paragraph{Computational tractability}
For any $\P$-integrable upper semi-continuous function $\ell:\reals^d\to\reals$, the evaluation of the \emph{worst-case risk} over OT ambiguity sets admits a powerful dual reformulation~\cite{Blanchet2019},
\begin{equation*}
    \sup_{\Q\in\ball{\varepsilon}{c}{\P}}
    \expectedValue{\Q}{\ell(x)}
    =
    \inf_{\lambda\geq 0}
    \lambda\varepsilon+\expectedValue{\P}{\sup_{\xi\in\reals^d} \ell(\xi)-\lambda c(\xi-x)},
\end{equation*}
which collapses to computationally tractable finite-dimensional optimization problems for many cases of practical interest~\cite{shafieezadeh2023new}.


\section{Propagate Distributional Uncertainty}
\label{sec:propagation}

In this section, we study how OT ambiguity sets propagate via linear transformations. Before doing so, we show that naive approaches (in particular, propagation of the center only, or propagation based on Lipschitz bounds) fail to effectively capture the propagation of distributional uncertainty.

\subsection{Naive Approaches and their Shortcomings}

Given the OT ambiguity set $\ball{\varepsilon}{c}{\P}$, one might be tempted to approximate the result of the propagation $\pushforward{f}\ball{\varepsilon}{c}{\P}$ by $\ball{\varepsilon}{c}{\pushforward{f}\P}$. This approach suffers from fundamental limitations already in very simple settings, easily resulting in crude overestimation or in catastrophic underestimation of the ambiguity set, as shown in the next example.

\begin{example}
\label{ex:propagation:centernotenough}
Let $c(x_1-x_2)=|x_1-x_2|$ on $\reals$, and $\varepsilon>0$.
\begin{itemize}
    \item Let $A=0$. Then, $\pushforward{A}\ball{\varepsilon}{c}{\P}$ only contains $\diracDelta{0}$, whereas $\ball{\varepsilon}{c}{\pushforward{A}\P}=\ball{\varepsilon}{c}{\diracDelta{0}}$ contains all distributions whose first moment is at most $\varepsilon$. Therefore, $\ball{\varepsilon}{c}{\pushforward{A}\P}$ \emph{overestimates} the true distributional uncertainty.
    
    \item Let $A=2x$, and $\P = \delta_0$. Then, $\ball{\varepsilon}{c}{\pushforward{A}\P}=\ball{\varepsilon}{c}{\diracDelta{0}}$. In~\cref{thm:lin:trans} we show that $\pushforward{A}\ball{\varepsilon}{c}{\P} = \ball{2\varepsilon}{c}{\diracDelta{0}}$. Thus, $\ball{\varepsilon}{c}{\pushforward{A}\P}$ \emph{underestimates} the true uncertainty.
\end{itemize}
\end{example}

Moreover, one might be tempted to bound the propagated distributional uncertainty with the Lipschitz constant $L$ of $A$, i.e., to ``upper bound'' $\pushforward{A}\ball{\varepsilon}{c}{\P}$ with $\ball{L\varepsilon}{c}{\P}$. 
However, this approach suffers from two major limitations. First, the transportation cost $c(x_1-x_2)$ might not be $\norm{x_1-x_2}$, which makes the Lipschitz bound not directly applicable. Indeed, already in~\cref{ex:propagation:centernotenough}, an increase by a factor of 2 in the radius does not alleviate the underestimation of the true ambiguity set: one needs to use $L^2=2^2$ to account for the transportation cost not being $\norm{x_1-x_2}$.
Second, even if the transportation cost is $\norm{x_1-x_2}$, Lipschitz bounds might be overly conservative, as shown next. 

\begin{example}
Let $c(x_1-x_2)=\|x_1-x_2\|$ on $\reals^d$, $\varepsilon>0$, and $A$ be a diagonal matrix with diagonal entries $\{0,0,\ldots,n\}$. Then, $\pushforward{A}\ball{\varepsilon}{c}{\P}$ eliminates all the distributional uncertainty in the first $d-1$ dimensions. However, $\ball{n\varepsilon}{c}{\delta_0}$ contains, among others, all distributions of the form $\delta_0 \otimes \P$ with $\delta_0 \in \mathcal P(\reals)$, and $\P \in \mathcal P(\reals^{d-1})$ satisfying $\mathbb E_{\P}\left[\|x\|  \right]\leq n\varepsilon$.
\end{example}

These shortcomings of naive uncertainty propagation prompt us to study the propagation of OT ambiguity sets.


\subsection{Propagation via Linear Transformations}
\label{subsec:linear:transformations}

We now investigate how OT ambiguity sets are propagated through linear transformations defined by the matrix $A\in\reals^{m\times d}$. To do so, we require the following mild structural assumption on the transportation cost.

\begin{assumption}
\label{assump:transportation:cost}
The transportation cost $c$ is orthomonotone: $c(x_1+x_2) \geq c(x_1)$ for all $x_1,x_2 \in \reals^d$ satisfying $\T{x_1}x_2 = 0$.
\end{assumption}


\begin{remark}
All transportation costs of the form $c(x_1-x_2)=\psi(\norm{x_1-x_2})$, where $\norm{\cdot}$ is the Euclidean norm and $\psi:\nonnegativeReals\to\nonnegativeReals$ is monotone and lower semi-continuous, are orthomonotone: for $x_1,x_2\in\reals^d$ such that $\T{x_1}x_2=0$
\begin{equation*}
    \psi(\norm{x_1+x_2})
    =\psi\left(\sqrt{\norm{x_1}^2+\norm{x_2}^2}\right)
    \geq \psi(\norm{x_1}).
\end{equation*}
\end{remark}

We can now state our main result on the propagation via linear transformations.

\begin{theorem}[Linear transformations]
\label{thm:lin:trans}
Let $\P\in\probSpace{\reals^d}$, and consider a linear transformation defined by a matrix $A \in \reals^{m \times d}$. Moreover, let $c:\reals^d \to \nonnegativeReals$ satisfy~\cref{assump:transportation:cost}. Then, 
\begin{align}
    \label{eq:lin:trans:arbitrary}
        \pushforward{A}{\ball{\varepsilon}{c}{\P}} 
        \subseteq
        \ball{\varepsilon}{c \circ \pinv{A}}{\pushforward{A}\P},
\end{align} 
with $\pinv{A}$ the Moore–Penrose pseudoinverse of $A$. Moreover, if the matrix $A$ is full row-rank, then
\begin{align}
    \label{eq:lin:trans:surjective}
        \pushforward{A}{\ball{\varepsilon}{c}{\P}}
        =
        \ball{\varepsilon}{c \circ \pinv{A}}{\pushforward{A}\P}.
\end{align}
with $\pinv{A} = \T A (A \T A)^{-1}$.
\end{theorem}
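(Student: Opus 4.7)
The plan is to prove the two statements in sequence, with the inclusion in \eqref{eq:lin:trans:arbitrary} being the conceptual core and the equality in \eqref{eq:lin:trans:surjective} requiring an additional lifting construction enabled by surjectivity.

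For the inclusion, I would pick an arbitrary $\Q' \in \pushforward{A}\ball{\varepsilon}{c}{\P}$, so that $\Q' = \pushforward{A}\Q$ for some $\Q$ with $\transportCost{c}{\P}{\Q} \leq \varepsilon$. Let $\gamma \in \setPlans{\P}{\Q}$ achieve this bound (or approximate it by $\varepsilon + \delta$, and let $\delta \to 0$ at the end). Define $\gamma'$ as the pushforward of $\gamma$ through the map $(x_1,x_2) \mapsto (Ax_1, Ax_2)$. Its marginals are $\pushforward{A}\P$ and $\pushforward{A}\Q = \Q'$, so $\gamma' \in \setPlans{\pushforward{A}\P}{\Q'}$. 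The key algebraic step is that $\pinv{A} A$ is the orthogonal projection onto $(\kernel A)^\perp$, hence the decomposition
\begin{equation*}
    x_1 - x_2 = \pinv{A} A (x_1-x_2) + (I - \pinv{A} A)(x_1-x_2)
\end{equation*}
is an orthogonal one. By \cref{assump:transportation:cost} applied to the two orthogonal components, $c(x_1-x_2) \geq c(\pinv{A} A(x_1-x_2)) = c(\pinv{A}(Ax_1 - Ax_2))$. Integrating against $\gamma$ and changing variables under the pushforward yields $\int c(\pinv{A}(y_1 - y_2))\,\d\gamma'(y_1,y_2) \leq \varepsilon$, which proves $\Q' \in \ball{\varepsilon}{c \circ \pinv{A}}{\pushforward{A}\P}$.

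For the equality under the full row-rank assumption, the converse inclusion needs a construction in the opposite direction: given $\Q' \in \ball{\varepsilon}{c \circ \pinv{A}}{\pushforward{A}\P}$, I need to exhibit $\Q \in \ball{\varepsilon}{c}{\P}$ with $\pushforward{A}\Q = \Q'$. Pick a near-optimal coupling $\gamma' \in \setPlans{\pushforward{A}\P}{\Q'}$ for the cost $c \circ \pinv{A}$. Disintegrate $\P$ along $A$, i.e., write $\P(\d x_1) = \P(\d x_1 \mid y_1)\,\pushforward{A}\P(\d y_1)$, where $\P(\cdot \mid y_1)$ is supported on $A^{-1}(y_1)$. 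Now use the fact that full row-rank gives the right-inverse identity $A\pinv{A} = I_m$ to define
\begin{equation*}
    \gamma(\d x_1, \d x_2) = \delta_{x_1 + \pinv{A}(y_2 - y_1)}(\d x_2)\, \P(\d x_1 \mid y_1)\, \gamma'(\d y_1, \d y_2),
\end{equation*}
so that $x_2 = x_1 + \pinv{A}(y_2 - y_1)$ satisfies $Ax_2 = y_2$. The first marginal of $\gamma$ is then $\P$, and the second marginal $\Q$ satisfies $\pushforward{A}\Q = \Q'$ by construction. The cost collapses via $x_1 - x_2 = \pinv{A}(y_1 - y_2)$ to $\int c(\pinv{A}(y_1-y_2))\,\d\gamma'(y_1,y_2) \leq \varepsilon$, yielding $\transportCost{c}{\P}{\Q} \leq \varepsilon$, as desired.

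The main obstacle I anticipate is the measure-theoretic justification of the disintegration step and of the joint-measurability of the map $(x_1, y_1, y_2) \mapsto x_1 + \pinv{A}(y_2 - y_1)$; since $\reals^d$ is Polish and $\pushforward{A}\P$ is a Borel probability measure, a regular conditional probability $\P(\cdot \mid y_1)$ exists, and the displacement map is continuous, so this should go through cleanly. A minor auxiliary point is that when optimal couplings are not attained, the argument must be run with $(\varepsilon + \delta)$-optimal plans and the conclusion obtained by taking $\delta \downarrow 0$; this is standard given the lower semi-continuity of $c$. Finally, the formula $\pinv{A} = \T{A}(A\T{A})^{-1}$ in the full row-rank case is a direct consequence of $A$ being surjective and is invoked only to make the right-inverse property $A \pinv{A} = I_m$ explicit.
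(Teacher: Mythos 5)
Your proof is correct, and the forward inclusion \eqref{eq:lin:trans:arbitrary} is essentially the paper's argument: push a (near-)optimal coupling forward through $A\times A$, note that $\pinv{A}A$ is the orthogonal projector onto $\kernel(A)^\perp$, and invoke orthomonotonicity on the orthogonal decomposition $x_1-x_2=\pinv{A}A(x_1-x_2)+(I-\pinv{A}A)(x_1-x_2)$. Where you genuinely diverge is the converse inclusion under full row-rank. The paper first establishes the invertible case separately, then factors $A=\pi_1 S$ with $S$ invertible so as to reduce to the coordinate projection $\pi_1$, and finally builds the lifted measure $\bar\Q$ and coupling $\bar\gamma$ by gluing the disintegration $\{\P_{y}\}$ of $\P$ along $\pi_1$ with the disintegration $\{\gamma_{y_1}\}$ of the optimal plan, keeping the complementary coordinate fixed ($z_1=z_2$ a.e.). Your construction bypasses both the invertible-case preamble and the reduction to $\pi_1$: you disintegrate $\P$ along $A$ and transport each conditional fiber by the deterministic displacement $x_2=x_1+\pinv{A}(y_2-y_1)$, which satisfies $Ax_2=y_2$ by $A\pinv{A}=I_m$ and has cost exactly $c(\pinv{A}(y_1-y_2))$. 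Geometrically this is the same idea --- the displacement lies in $\kernel(A)^\perp$, which for $A=\pi_1$ is precisely ``move $y$ and freeze $z$'' --- but stated intrinsically it handles all full row-rank $A$ in one stroke and makes the marginal and cost computations one-line verifications. What the paper's longer route buys is the standalone identity $\transportCost{c}{\P}{\Q}=\transportCost{c\circ\inv{A}}{\pushforward{A}\P}{\pushforward{A}\Q}$ for invertible $A$, which is of independent interest; your route is the more economical proof of the theorem as stated. Your handling of the measure-theoretic points (existence of the regular conditional probability on a Polish space, $(\varepsilon+\delta)$-optimal plans with $\delta\downarrow 0$ when minimizers are not attained) is adequate.
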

\begin{proof}
See Appendix~\ref{appendix:proof:thm1}.
\end{proof}

In words, \cref{thm:lin:trans} asserts that the result of the propagation $\pushforward{A}{\ball{\varepsilon}{c}{\P}}$ is itself an OT ambiguity set, with the same radius $\varepsilon$, propagated center $\pushforward{A}\P$, and an $A$-induced transportation cost $c\circ \pinv{A}$.

\begin{remark}
\label{remark:lin:bounded}
Theorem~\ref{thm:lin:trans} continues to hold if the OT ambiguity set $\ball{\varepsilon}{c}{\P}$ is defined over a subset $\mathcal X \subset \reals^d$, with $\P \in \probSpace{\mathcal X}$, $c:\reals^d \to \nonnegativeReals$, and $A:\mathcal X \to \mathcal Y\coloneqq A \mathcal X$. In that case, the propagated ambiguity set $\ball{\varepsilon}{c \circ \pinv{A}}{\pushforward{A}\P}$ is restricted to all distributions supported on $A \mathcal X$.
\end{remark}

The following example shows that the equality \eqref{eq:lin:trans:surjective} does generally \emph{not} hold for non-surjective linear maps.

\begin{example}
Consider
$
A\coloneqq
\begin{bsmallmatrix}
    1 & 0 \\ 0 & 0
\end{bsmallmatrix}
$
with pseudoinverse $\pinv{A}=A$, the quadratic transportation cost $c(x_1-x_2)=\norm{x_1-x_2}^2$, the probability distribution $\P=\delta_{(0,0)}\in\mathcal{P}(\reals^2)$, and an arbitrary radius $\varepsilon>0$. Let $\Q=\delta_{(0,1)}\in\mathcal{P}(\reals^2)$. Since $(0,1)\not\in\range(A)$, $\Q$ does not belong to $\pushforward{A}\ball{\varepsilon}{c}{\P}$. However, $\transportCost{c\circ\pinv{A}}{\Q}{\P}=\norm{\pinv{A}\begin{bsmallmatrix} 0 \\ 0\end{bsmallmatrix}-\pinv{A}\begin{bsmallmatrix} 0 \\ 1\end{bsmallmatrix}}^2=0$.
Thus, $\Q\in\ball{\varepsilon}{c \circ \pinv{A}}{\pushforward{A}\P}$, and 
$\pushforward{A}\ball{\varepsilon}{c}{\P}\subsetneq\ball{\varepsilon}{c \circ \pinv{A} }{\pushforward{A}\P}$.
\end{example}


\section{Stochastic Linear Control Systems}
\label{sec:LTI}

In this section, we focus on the stochastic linear time-invariant control system
\begin{align}
\label{eq:stoch:dyn:sys}
\begin{split}
    x_{t+1} &= A x_t + B u_t + D w_t\\
    u_t &= K x_t + v_t
\end{split}
\end{align}
and show that the theory of~\cref{sec:propagation} can be efficiently exploited to capture, propagate, and even control distributional uncertainty. 
We assume that the system matrices $A \in \reals^{d \times d}$, $B \in \reals^{d \times m}$, $D \in \reals^{d \times r}$ are known, the initial condition $x_0 \in \reals^d$ is known and deterministic, and the noise sequence $\{w_t\}_{t\in\mathbb N} \subset \reals^r$ is i.i.d. according to an \emph{unknown} light-tailed distribution $\P$ that belongs to the OT ambiguity set $\ball{\varepsilon}{\|\cdot\|_2^2}{\empiricalP{}}$, with reference distribution $\empiricalP{}$ and (translation-invariant and orthomonotone) transportation cost $c(\cdot) = \|\cdot\|_2^2$. 

\begin{remark}
    Modelling the noise using the OT ambiguity set $\ball{\varepsilon}{\|\cdot\|_2^2}{\empiricalP{}}$ allows us to capture many important scenarios. First, we generalize the works which assume the noise to belong to a specific class of distributions (e.g., Gaussian). Second, it allows us to robustly capture the system uncertainty when only partial statistical information about the noise is available (e.g., samples, moments, etc.). In many such cases, $\P \in \ball{\varepsilon}{\|\cdot\|_2^2}{\empiricalP{}}$ can be guaranteed with high probability (see the statistical properties in Section~\ref{subsec:properties:OT}).
\end{remark}


\subsection{Capture Distributional Uncertainty in Linear Systems}
\label{subsec:LTI:capture}

We start by defining, for any $t\in \mathbb N$, the vectors $\mathbf{v}_{[t-1]} = \begin{bmatrix} v_{t-1}^\top & \cdots & v_0^\top \end{bmatrix}^\top$ and $\mathbf{w}_{[t-1]} = \begin{bmatrix}w_{t-1}^\top & \cdots & w_0^\top \end{bmatrix}^\top$, and rewriting the system dynamics~\eqref{eq:stoch:dyn:sys} in the form
\begin{align}
\label{eq:stoch:dyn:sys:0-t}
\begin{split}
    x_{t} &= (A+BK)^t x_0 + \mathbf{B}_{t-1} \mathbf{v}_{[t-1]} + \mathbf{D}_{t-1} \mathbf{w}_{[t-1]}, \\
    \mathbf{B}_{t-1} &= \begin{bmatrix}B & (A+BK) B & \ldots & (A+BK)^{t-1} B \end{bmatrix}, \\ 
    \mathbf{D}_{t-1} &= \begin{bmatrix} D & (A+BK) D & \ldots & (A+BK)^{t-1} D \end{bmatrix}.
\end{split}
\end{align}
\cref{eq:stoch:dyn:sys:0-t} unveils that the distributional uncertainty of the state $x_t$ can be characterized through the pushforward via the matrix $\mathbf{D}_{t-1}$ of the OT ambiguity set associated to $\mathbf{w}_{[t-1]}$.

The following lemma explains how to construct the OT ambiguity set of the noise trajectory $\mathbf{w}_{[t-1]}$, starting from the ambiguity set of $w_0$, i.e., $\ball{\varepsilon}{\|\cdot\|_2^2}{\empiricalP{}}$. Since $\mathbf{w}_{[t-1]}$ is composed of $t$ i.i.d.\ random variables distributed according to $\P$, its distribution is the $t$-fold product distribution $\P^{\otimes t}$. 

\begin{lemma}
\label{lemma:conc:ineq:noise}
Let $\P\in \ball{\varepsilon}{\|\cdot\|_2^2}{\empiricalP{}}$ with probability $1-\delta$, for some $\delta\geq 0$. Then,
\begin{align*}
    \P^{\otimes t} \in \ball{t \varepsilon}{\|\cdot\|_2^2}{\empiricalP{}^{\otimes t}},\;\; \text{with probability $1-\delta$}.
\end{align*}
\end{lemma}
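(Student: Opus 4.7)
The proof is essentially a deterministic assertion dressed with a probability statement: the event $\P \in \ball{\varepsilon}{\|\cdot\|_2^2}{\empiricalP{}}$ implies (without randomness) the event $\P^{\otimes t} \in \ball{t\varepsilon}{\|\cdot\|_2^2}{\empiricalP{}^{\otimes t}}$, so both hold on the same probability-$(1-\delta)$ event. The plan is therefore to establish the deterministic implication: if $\transportCost{c}{\P}{\empiricalP{}} \leq \varepsilon$ with $c(\cdot)=\|\cdot\|_2^2$ on $\reals^r$, then $\transportCost{\tilde c}{\P^{\otimes t}}{\empiricalP{}^{\otimes t}} \leq t\varepsilon$, where $\tilde c(\cdot) = \|\cdot\|_2^2$ on $\reals^{rt}$.

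The key structural observation is that the squared Euclidean cost is additive across the coordinate blocks: for any $\mathbf{x}=(x^{(1)},\ldots,x^{(t)})$ and $\mathbf{y}=(y^{(1)},\ldots,y^{(t)})$ in $\reals^{rt}$,
\begin{equation*}
    \|\mathbf{x}-\mathbf{y}\|_2^2 \;=\; \sum_{i=1}^t \|x^{(i)}-y^{(i)}\|_2^2.
\end{equation*}
This is exactly the decomposition that makes product couplings work. First, I would fix an arbitrary $\eta>0$ and pick a near-optimal coupling $\gamma \in \setPlans{\P}{\empiricalP{}}$ with $\int_{\reals^r\times\reals^r}\|w_1-w_2\|_2^2 \,\d\gamma(w_1,w_2) \leq \varepsilon+\eta$. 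Next I would form the product coupling $\gamma^{\otimes t}$ on $(\reals^r\times\reals^r)^t$, re-indexed as a distribution on $\reals^{rt}\times\reals^{rt}$ by grouping the first and second components of each pair.

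The verification has two routine parts. (i) \emph{Marginals}: by construction, the first marginal of $\gamma^{\otimes t}$ is $\P^{\otimes t}$ and the second is $\empiricalP{}^{\otimes t}$, so $\gamma^{\otimes t} \in \setPlans{\P^{\otimes t}}{\empiricalP{}^{\otimes t}}$. (ii) \emph{Cost}: using the displayed decomposition of $\tilde c$ and Fubini/Tonelli on the product measure,
\begin{equation*}
    \int \|\mathbf{w}_1-\mathbf{w}_2\|_2^2 \,\d\gamma^{\otimes t}
    \;=\; \sum_{i=1}^t \int \|w_1^{(i)}-w_2^{(i)}\|_2^2 \,\d\gamma
    \;\leq\; t(\varepsilon+\eta).
\end{equation*}
Taking the infimum over couplings (which is dominated by this particular choice) and letting $\eta\downarrow 0$ yields $\transportCost{\tilde c}{\P^{\otimes t}}{\empiricalP{}^{\otimes t}} \leq t\varepsilon$, which is the desired membership.

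Finally, the probability statement follows immediately: the above is a pointwise (in the samples $\{\widehat{w}^{(i)}\}$) implication, so it holds on the same event of probability at least $1-\delta$ that guarantees $\P\in\ball{\varepsilon}{\|\cdot\|_2^2}{\empiricalP{}}$. I do not expect a real obstacle here; the only subtlety is making sure the additivity of the cost and the product-coupling construction are invoked cleanly, and recognizing that only the \emph{upper bound} direction is needed (no optimality of the product coupling is claimed, which is good because product couplings need not be optimal in general, although for the squared Euclidean cost they in fact are).
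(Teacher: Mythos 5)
Your proposal is correct and follows essentially the same route as the paper: form the product coupling $\gamma^{\otimes t}$ of a (near-)optimal coupling $\gamma\in\setPlans{\P}{\empiricalP{}}$, use the block-additivity of the squared Euclidean cost to bound its cost by $t\varepsilon$, and note that the implication is deterministic on the event $\P\in\ball{\varepsilon}{\|\cdot\|_2^2}{\empiricalP{}}$. The only (harmless) difference is that you work with an $\eta$-suboptimal coupling and let $\eta\downarrow 0$, whereas the paper invokes the optimal coupling directly.
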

\begin{proof}
See Appendix~\ref{appendix:proof:lemma2}.
\end{proof}

The true power of Lemma~\ref{lemma:conc:ineq:noise} is revealed in data-driven scenarios in the settings of control tasks over a specified prediction horizon $t \in \mathbb N$ (e.g., model predictive control). In such cases, we have access to only $n$ i.i.d.\ noise samples $\{\widehat{w}^{(i)}\}_{i=1}^n$ from $\P$, and we construct the empirical distribution $\empiricalP{} = \frac{1}{n}\sum_{i=1}^n \delta_{\widehat{w}^{(i)}}$. By~\cite[Theorem 2]{Fournier2015}, $\P \in \ball{\varepsilon}{c}{\widehat{\P}}$ with high probability, provided that the radius $\varepsilon$ is in the order of $n^{-1/\max\{2,r\}}$. Then, \cref{lemma:conc:ineq:noise} guarantees that the distribution $\P^{\otimes t}$ of the noise trajectory $\mathbf{w}_{[t-1]}$ belongs, with high-probability, to $\ball{\varepsilon_1}{\|\cdot\|_2^2}{\empiricalP{}^{\otimes t}}$, with radius $\varepsilon_1$ in the order of $t n^{-1/\max\{2,r\}}$. This is preferable over the alternative strategy of working directly with $n$ noise trajectories $\{(\widehat{w}_{t-1}^{(i)},\ldots,\widehat{w}_0^{(i)})\}_{i=1}^n$, and constructing an OT ambiguity set $\ball{\varepsilon_1}{\|\cdot\|_2^2}{\empiricalQ{}}$ around the empirical distribution $\empiricalQ = \frac{1}{n} \sum_{i=1}^n \delta_{(\widehat{w}_{t-1}^{(i)},\ldots,\widehat{w}_0^{(i)})}$ based solely on~\cite[Theorem 2]{Fournier2015}. In that case, the radius $\varepsilon_2$ should be in the order of $n^{-1/\max\{2,t r\}}$. In practical cases, where the dimension of the noise $r$ is low, the \emph{linear} dependence on the horizon in $t n^{-1/t r}$ ensures that the ambiguity radius shrinks much faster with the number of samples  $n$, as opposed to the exponential dependence in $n^{-1/\max\{2,t r\}}$.


\subsection{Propagate and Control Distributional Uncertainty}
\label{subsec:LTI:propagate:control}

We can now study the propagation of the uncertainty from the noise $\mathbf{w}_{[t-1]}$ to the state $x_t$. Importantly, the resulting OT ambiguity set capturing the distributional uncertainty of $x_t$ unveils the role of the two components in the control input $u_t = K x_t + v_t$: the feedforward term $v_t$ controls the \emph{center}, while the feedback gain matrix $K$ controls the \emph{shape and size} of this OT ambiguity set. This is explained in the following proposition and the subsequent discussion.

\begin{proposition}
\label{prop:ambiguity:set:state}
Consider the linear control system~\eqref{eq:stoch:dyn:sys}, with i.i.d.\ noise $\{w_t\}_{t\in\mathbb N}$. Moreover, let $\ball{\varepsilon}{\|\cdot\|_2^2}{\empiricalP{}}$ capture the distributional uncertainty of $w_t$, $\forall t\in \mathbb{N}$ . Then, the distributional uncertainty of $x_t$ is captured by
\begin{align}
\label{eq:ambiguity:x_t}
    \ball{t \varepsilon}{\|\cdot\|_2^2\circ \pinv{\mathbf{D}_{t-1}}}{\delta_{(A+BK)^t x_0 + \mathbf{B}_{t-1} \mathbf{v}_{[t-1]}}\ast(\pushforward{\mathbf{D}_{t-1}}{\empiricalP{}^{\otimes t}})}.
\end{align}
\end{proposition}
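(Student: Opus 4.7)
The plan is to decompose $x_t$ via \eqref{eq:stoch:dyn:sys:0-t} into a deterministic shift and a linear pushforward of the noise trajectory, and then chain together \cref{lemma:conc:ineq:noise}, \cref{thm:lin:trans}, and translation invariance of the OT discrepancy. Concretely, setting $y_t \coloneqq (A+BK)^t x_0 + \mathbf{B}_{t-1}\mathbf{v}_{[t-1]}$, equation \eqref{eq:stoch:dyn:sys:0-t} gives $x_t = y_t + \mathbf{D}_{t-1}\mathbf{w}_{[t-1]}$, so the distribution of $x_t$ equals $\delta_{y_t} \ast (\pushforward{\mathbf{D}_{t-1}}\P^{\otimes t})$, where $\P^{\otimes t}$ is the joint law of the i.i.d.\ noise trajectory $\mathbf{w}_{[t-1]}$.

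Next, I would apply \cref{lemma:conc:ineq:noise} to place $\P^{\otimes t}$ in the ambiguity set $\ball{t\varepsilon}{\|\cdot\|_2^2}{\empiricalP{}^{\otimes t}}$. Since $\|\cdot\|_2^2$ is orthomonotone, \cref{thm:lin:trans} applies to the matrix $\mathbf{D}_{t-1}$ and yields the inclusion
\begin{equation*}
\pushforward{\mathbf{D}_{t-1}} \ball{t\varepsilon}{\|\cdot\|_2^2}{\empiricalP{}^{\otimes t}} \subseteq \ball{t\varepsilon}{\|\cdot\|_2^2 \circ \pinv{\mathbf{D}_{t-1}}}{\pushforward{\mathbf{D}_{t-1}}\empiricalP{}^{\otimes t}},
\end{equation*}
so that $\pushforward{\mathbf{D}_{t-1}}\P^{\otimes t}$ lies in the right-hand side.

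It remains to handle the deterministic shift. I would establish the elementary translation identity $\delta_y \ast \ball{\varepsilon}{\tilde c}{\mu} \subseteq \ball{\varepsilon}{\tilde c}{\delta_y \ast \mu}$ for any $y \in \reals^m$ and any transportation cost $\tilde c$: if $\gamma$ couples $\mu$ and $\nu$ with cost at most $\varepsilon$, then the pushforward of $\gamma$ by $(x_1,x_2) \mapsto (x_1+y,x_2+y)$ couples $\delta_y \ast \mu$ and $\delta_y \ast \nu$ with identical cost, since $\tilde c$ acts on the differences $x_1-x_2$. Applying this with $y = y_t$ and ambient distribution $\pushforward{\mathbf{D}_{t-1}}\empiricalP{}^{\otimes t}$ produces exactly \eqref{eq:ambiguity:x_t}. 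The two invocations of \cref{lemma:conc:ineq:noise,thm:lin:trans} are mechanical once the decomposition is in place; the only point requiring a sanity check is that the induced cost $\|\cdot\|_2^2 \circ \pinv{\mathbf{D}_{t-1}}$ still depends on its argument only through differences, which is immediate since $\pinv{\mathbf{D}_{t-1}}$ is linear. Conceptually, the proof cleanly exposes the separation between the feedforward $\mathbf{v}_{[t-1]}$, which enters through the convolution center, and the feedback gain $K$, which reshapes the ambiguity set through $\mathbf{D}_{t-1}$.
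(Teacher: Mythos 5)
Your proof is correct and follows essentially the same route as the paper: apply \cref{lemma:conc:ineq:noise} to the noise trajectory, push the resulting ambiguity set through $\mathbf{D}_{t-1}$ via \cref{thm:lin:trans}, and then absorb the deterministic term $(A+BK)^t x_0 + \mathbf{B}_{t-1}\mathbf{v}_{[t-1]}$ as a convolution with a Dirac. The only difference is that you prove the translation step directly by pushing the optimal coupling forward under $(x_1,x_2)\mapsto(x_1+y,x_2+y)$, whereas the paper cites an external corollary for this fact; your self-contained argument is valid.
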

\begin{proof}
See Appendix~\ref{appendix:proof:prop1}.
\end{proof}

Recall from~\cref{thm:lin:trans} that the propagation of the distributional uncertainty via $\mathbf{D}_{t-1}$ is \emph{exact} whenever the matrix $\mathbf{D}_{t-1}$ is full row-rank. This trivially holds when $D$ is the identity matrix, and more generally can be guaranteed by an appropriate choice of the feedback gain matrix $K$.
In the following, we inspect the three components of the OT ambiguity set~\eqref{eq:ambiguity:x_t} to shed light on the roles of the feedforward control trajectory $\mathbf{v}_{[t-1]}$ and of the feedback gain matrix $K$.

(1) \textbf{Ambiguity radius $t \varepsilon$}. This quantity grows linearly in the horizon $t$, and can be shrunk only by shrinking $\varepsilon$. This, in turn, requires having access to a higher number of noise samples $\{\widehat{w}^{(i)}\}_{i=1}^n$ (recall that $\varepsilon$ decreases as $n^{-1/\max\{2,r\}}$).

(2) \textbf{Center $\delta_{(A+BK)^t x_0 + \mathbf{B}_{t-1} \mathbf{v}_{[t-1]}}\ast(\pushforward{\mathbf{D}_{t-1}}{\empiricalP{}^{\otimes t}})$}. The center distribution is influenced by both $K$ and $\mathbf{v}_{[t-1]}$. In particular, $K$ determines the shape of the center distribution (through the pushforward via the matrix $\mathbf{D}_{t-1}$); $\mathbf{v}_{[t-1]}$, instead, translates the support of the center distribution in $\reals^d$. This observation becomes more clear in the data-driven scenario: if $\empiricalP{}$ is the empirical distribution supported on the points $\{\widehat{w}^{(i)}\}_{i=1}^n$, then the distribution $\empiricalP{}^{\otimes t}$ is the empirical distribution supported on the noise trajectories $\widehat{\mathbf{w}}_{[t-1]}^{(\mathbf{i})}\coloneqq \begin{bmatrix}(w^{(i_1)})^\top & \cdots & (w^{(i_t)})^\top \end{bmatrix}^\top$, for $\mathbf{i}=[i_1,\ldots,i_t]$, $i_j \in [n]$, $j \in [t]$. In that case, the center distribution becomes the empirical distribution supported on the points
\begin{align*}
    (A+BK)^t x_0 + \mathbf{B}_{t-1} \mathbf{v}_{[t-1]} + \mathbf{D}_{t-1} \widehat{\mathbf{w}}_{[t-1]}^{(\mathbf{i})}.
\end{align*}
Thus, $\mathbf{D}_{t-1}$ (and consequently, $K$) maps the noise trajectory to the point $(A+BK)^t x_0 +\mathbf{D}_{t-1} \widehat{\mathbf{w}}_{[t-1]}^{(\mathbf{i})} \in \reals^d$, and $\mathbf{B}_{t-1} \mathbf{v}_{[t-1]}$ controls this point in $\reals^d$. 

(3) \textbf{Transportation cost $\|\cdot\|_2^2\circ \pinv{\mathbf{D}_{t-1}}$}. This function influences both the shape and size of the OT ambiguity set. For ease of exposition, we assume that $\mathbf{D}_{t-1}$ is full row-rank. Then, $\pinv{\mathbf{D}_{t-1}} = \mathbf{D}_{t-1}^\top (\mathbf{D}_{t-1} \mathbf{D}_{t-1}^\top)^{-1}$. Moreover, if $U \Sigma V^\top$ is the SVD of $\mathbf{D}_{t-1}$, then $V \pinv{\Sigma} U^\top$ is the SVD of $\pinv{\mathbf{D}_{t-1}}$. In particular, the singular values of $\pinv{\mathbf{D}_{t-1}}$ are obtained by taking the inverse of the singular values of $\mathbf{D}_{t-1}$. Consequently, if $\{\sigma_i\}_{i=1}^d$ are the singular values of $\mathbf{D}_{t-1}$ and $\{u_i\}_{i=1}^d$ are the orthonormal columns of $U$, the transportation cost becomes
    \begin{align*}
        \|\pinv{\mathbf{D}_{t-1}}(x_1-x_2)\|_2^2 = \sum_{i=1}^d \frac{1}{\sigma_i^2} \left|u_i^\top (x_1-x_2)\right|^2.
    \end{align*}
    In words, the cost of moving probability mass from the center distribution in the direction $u_i$ costs $\norm{x_1-x_2}^2/\sigma_i^2$ (indeed, $u_i^\top (x_1-x_2)$ is the orthogonal projection of $x_1-x_2$ onto $u_1$). The feedback gain matrix $K$ controls the amount of mass moved in this direction through the singular value $\sigma_i$ of the matrix $\mathbf{D}_{t-1}$. Specifically, the higher the value of $\sigma_i$, the more probability mass is moved in the direction $u_i$. Similarly, the lower the value of $\sigma_i$, the less probability mass is moved in the direction $u_i$. This way, we can precisely control the displacement of probability mass from the center distribution and so the shape and size of the OT ambiguity set~\eqref{eq:ambiguity:x_t}. Alternatively, if only the size of~\eqref{eq:ambiguity:x_t} is of interest, the maximum singular value of $\mathbf{D}_{t-1}$ yields the upper bound
    \begin{align*}
        \ball{t \varepsilon}{\|\cdot\|_2^2\circ \pinv{\mathbf{D}_{t-1}}}{\widetilde \P} \subseteq \ball{t \varepsilon}{\sigma_{\text{min}}(\pinv{\mathbf{D}_{t-1}})^2\|\cdot\|_2^2}{\widetilde \P} = \ball{t \varepsilon \sigma_{\text{max}}^2}{\|\cdot\|_2^2}{\widetilde \P}
    \end{align*}
    with $\widetilde \P\coloneqq\delta_{(A+BK)^t x_0 + \mathbf{B}_{t-1} \mathbf{v}_{[t-1]}}\ast(\pushforward{\mathbf{D}_{t-1}}{\empiricalP{}^{\otimes t}})$ and $\sigma_{\text{max}}\coloneqq\sigma_{\text{max}}(\mathbf{D}_{t-1})$.

Summarizing, the careful inspection of the OT ambiguity set~\eqref{eq:ambiguity:x_t} brings to light the separation of the control tasks carried out by the two components of $u_t$. On the one hand, the feedforward term $v_t$ controls the position in $\reals^d$ of the support of the center distribution (and, with it, the position of the entire OT ambiguity set). On the other hand, the feedback gain matrix $K$ controls the shape and size of the center distribution, as well as the shape and size of the displacement of probability mass from the center distribution (through the transportation cost).



\section{Applications}
\label{sec:applications}

In this section, we apply our theoretic results to stochastic reachability analysis and trajectory planning. In our setting, the decision-maker disposes only of finitely many samples (i.e., noise trajectories) and seeks a deterministic set capturing the state of the system (in reachability analysis) or the cheapest control input to reach a given target (in trajectory planning) which perform well under the true distribution.

\subsection{Preliminaries}
Before diving into our applications, we fix the notation and present a preliminary result in distributionally robust optimization. Henceforth, we assume that the matrix $\mathbf{D}_{t-1}$ is full row-rank and that we have access to $n$ i.i.d.\ noise samples $\{\widehat{w}^{(i)}\}_{i=1}^n$, yielding the $n$ noise sample trajectories 
\begin{align*}
    \widehat{\mathbf{w}}_{[t-1]}^{(i)}:= \begin{bmatrix}(\widehat{w}_{t-1}^{(i)})^\top & \ldots & (\widehat{w}_0^{(i)})^\top \end{bmatrix}^\top,
\end{align*}
for $i \in \{1,\ldots,n\}$. By~\cref{prop:ambiguity:set:state}, the distributional uncertainty of $x_t$ is captured by
\begin{equation*}
    \!\mathbb{S}_t(\!\mathbf{v}_{[t-1]}\!)
    \!\coloneqq\!
    \ball{t \varepsilon}{\|\cdot\|_2^2\circ \pinv{\mathbf{D}_{t-1}}}{\!\delta_{(A+BK)^t x_0 + \mathbf{B}_{t-1} \mathbf{v}_{[t-1]}}\ast(\pushforward{\mathbf{D}_{t-1}}{\empiricalP{}^{\otimes t}})}.
\end{equation*}
In particular, the center of the ambiguity set~\eqref{eq:ambiguity:x_t} is supported on the $N$ controlled state samples
\begin{align*}
    \widehat{x}_t^{(i)} = (A+BK)^t x_0 + \mathbf{B}_{t-1} \mathbf{v}_{[t-1]} + \mathbf{D}_{t-1} \widehat{\mathbf{w}}_{[t-1]}^{(i)}.
\end{align*}
Consider now the polyhedral constraint set
\begin{equation}\label{eq:app:polyhedron}
    \mathcal X \coloneqq 
    \left\{x \in \reals^d:\, \max_{j \in [J]} a_j^\top x + b_j \leq 0,\; J \in \mathbb N\right\},
\end{equation}
and, for some $\gamma \in (0,1)$, we impose the distributionally robust CVaR constraint (with CVaR defined in~\eqref{eq:CVaR})
\begin{align}
\label{eq:CVaR:constraint}
    \sup_{\Q \in \mathbb{S}_t(\mathbf{v}_{[t-1]})} \text{CVaR}_{1-\gamma}^{\Q}\left(\max_{j \in [J]} a_j^\top x_t + b_j\right) \leq 0.
\end{align}
By \cite[Proposition~2.12]{shafieezadeh2023new}, \eqref{eq:CVaR:constraint} can be reformulated as follows: 
\begin{proposition}
\label{prop:DR:CVaR}
Constraint~\eqref{eq:CVaR:constraint} is equivalent to the following set of deterministic constraints, denoted by $\Gamma_t(\mathbf{v}_{[t-1]},A,b)$: 
\begin{align*}
&\quad\forall i \in [N], \forall j \in [J+1]:\\
&\quad\;\begin{cases}
\tau \in \reals, \lambda \in \reals_+, s_i \in \reals&
\\
\lambda \varepsilon_t N + \sum_{i=1}^{N} s_{i} \leq 0 &
\\
\alpha_j^\top \widehat{x}_t^{(i)} + \beta_j(\tau)  + \frac{1}{4\lambda} \alpha_j^\top \left((\pinv{\mathbf{D}_{t-1}})^\top \pinv{\mathbf{D}_{t-1}} \right)^{-1} \alpha_j \leq s_i &
\end{cases}
\end{align*}
with $\alpha_j := a_j/\gamma$ and $\beta_j(\tau)\coloneqq (b_j + \gamma \tau - \tau)/\gamma$, for $j \in [J]$, as well as $\alpha_{J+1}\coloneqq 0$ and $\beta_{J+1}(\tau):=\tau$.
\end{proposition}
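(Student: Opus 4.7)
The plan is to apply the OT strong-duality formula recalled in Section~\ref{subsec:properties:OT}, specialized to (i) the CVaR loss, (ii) an empirical center, and (iii) the quadratic cost $\|\cdot\|_2^2\circ\pinv{\mathbf{D}_{t-1}}$. Concretely, I would first plug the definition~\eqref{eq:CVaR} of CVaR into~\eqref{eq:CVaR:constraint} and swap the outer $\sup_\Q$ with the inner $\inf_\tau$ via Sion's minimax theorem (the objective is linear in $\Q$, convex in $\tau$, and $\mathbb{S}_t(\mathbf{v}_{[t-1]})$ is weakly compact under the quadratic cost). Direct algebra then yields
\begin{align*}
\tau + \gamma^{-1}\max\{0,\, \max_{j\in[J]} a_j^\top x_t + b_j - \tau\} = \max_{j\in[J+1]} \alpha_j^\top x_t + \beta_j(\tau),
\end{align*}
with the $\alpha_j,\beta_j(\tau)$ of the statement and the $(J+1)$-st slice encoding the constant $\tau$. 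Thus~\eqref{eq:CVaR:constraint} reduces to: $\exists\,\tau\in\reals$ such that $\sup_{\Q\in\mathbb{S}_t(\mathbf{v}_{[t-1]})} \expectedValue{\Q}{\max_{j\in[J+1]} \alpha_j^\top x_t + \beta_j(\tau)} \leq 0$.

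Second, I would apply the OT duality formula of Section~\ref{subsec:properties:OT} to this worst-case expectation, with radius $t\varepsilon$, center $\widetilde\P$ equal to the empirical distribution on the $N$ samples $\widehat{x}_t^{(i)}$, and cost $\|\cdot\|_2^2\circ\pinv{\mathbf{D}_{t-1}}$. Commuting the finite $\max_j$ with $\sup_\xi$, the dual becomes
\begin{align*}
\inf_{\lambda \geq 0}\; \lambda(t\varepsilon) + \frac{1}{N}\sum_{i=1}^{N}\max_{j\in[J+1]}\sup_{\xi\in\reals^d} \alpha_j^\top\xi + \beta_j(\tau) - \lambda\|\pinv{\mathbf{D}_{t-1}}(\xi - \widehat{x}_t^{(i)})\|_2^2.
\end{align*}
The inner supremum is an unconstrained concave quadratic in $\xi$; the first-order condition $\xi^\star - \widehat{x}_t^{(i)} = (2\lambda)^{-1}M^{-1}\alpha_j$, with $M \coloneqq (\pinv{\mathbf{D}_{t-1}})^\top\pinv{\mathbf{D}_{t-1}}$, and back-substitution yield the closed form $\alpha_j^\top \widehat{x}_t^{(i)} + \beta_j(\tau) + (4\lambda)^{-1}\alpha_j^\top M^{-1}\alpha_j$. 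The matrix $M$ is invertible by the full row-rank assumption on $\mathbf{D}_{t-1}$ (in fact $M^{-1} = \mathbf{D}_{t-1}\mathbf{D}_{t-1}^\top$), and the $(J+1)$-st slice is consistent since $\alpha_{J+1}=0$ kills the penalty term.

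Third, I would epigraphically encode the $\max_{j\in[J+1]}$ for each sample $i$ via slack variables $s_i$ and multiply the outer $\leq 0$ inequality by $N$, which produces precisely the constraints $\lambda(t\varepsilon)N + \sum_i s_i \leq 0$ together with $\alpha_j^\top \widehat{x}_t^{(i)} + \beta_j(\tau) + (4\lambda)^{-1}\alpha_j^\top M^{-1}\alpha_j \leq s_i$ for every $i\in[N]$, $j\in[J+1]$. The outer infimum-over-$(\lambda,\tau)$ being $\leq 0$ collapses to the existence of a feasible triple $(\tau,\lambda,\{s_i\})$, which is exactly $\Gamma_t(\mathbf{v}_{[t-1]}, A, b)$.

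The main obstacle is justifying the two strong-duality steps — the minimax swap between $\sup_\Q$ and $\inf_\tau$, and the primal-dual identity for the OT worst-case expectation — as both rely on technical compactness and integrability assumptions. These are precisely the points handled by the general machinery of \cite[Proposition~2.12]{shafieezadeh2023new}, which this proof essentially invokes after substituting in the structure of $\mathbb{S}_t(\mathbf{v}_{[t-1]})$; the one calculation genuinely particular to our setting is the closed-form solution of the inner quadratic supremum, whose well-posedness hinges on the invertibility of $M$ and hence on the full row-rank of $\mathbf{D}_{t-1}$.
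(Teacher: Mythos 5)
Your proposal is correct and follows essentially the same route as the paper's own argument: rewrite the CVaR as a max of $J+1$ affine functions via the $\alpha_j,\beta_j(\tau)$ substitution, swap $\sup_\Q$ and $\inf_\tau$, invoke the OT strong-duality result of \cite[Proposition~2.12]{shafieezadeh2023new} for the worst-case expectation, introduce epigraph slacks $s_i$, and collapse the outer infimum over $(\tau,\lambda)$ into an existence statement. The only difference is cosmetic: you carry out the inner concave-quadratic supremum in $\xi$ explicitly (correctly obtaining the $(4\lambda)^{-1}\alpha_j^\top M^{-1}\alpha_j$ term with $M^{-1}=\mathbf{D}_{t-1}\mathbf{D}_{t-1}^\top$), whereas the paper delegates that computation to the cited duality result.
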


\subsection{Distributionally Robust Reachability Analysis}
\label{subsec:reachability}
In the setting of stochastic reachability analysis, for a given feedforward input $\mathbf{v}_{[t-1]}$, we look for the \emph{smallest} (deterministic) set which contains, with high confidence, the state $x_t$ of a stochastic linear time-invariant system at some future time $t$. Specifically, for predefined $a_j\in\reals^n$, $\forall j \in [J]$, we parametrize the set as a polyhedron~\eqref{eq:app:polyhedron} and seek to solve
\begin{equation*}
\begin{array}{cl}
        \max & \sum_{j\in [J]} b_j \\
        \st & \DS \sup_{\Q \in \mathbb{S}_t(\mathbf{v}_{[t-1]})} \text{CVaR}_{1-\gamma}^{\Q}\left(\max_{j \in [J]} a_j^\top x_t + b_j\right) \leq 0.
\end{array}
\end{equation*}
\cref{prop:DR:CVaR} directly gives the convex reformulation
\begin{equation*}
    \begin{array}{cl}
        \max & \DS \sum_{j\in[J]}b_j \\
        \st & \DS b \in \Gamma_t(\mathbf{v}_{[t-1]},A,b).
    \end{array}
\end{equation*}
We evaluate our methodology on the two-dimensional linear system $A=\frac{1}{2}\begin{bsmallmatrix} 1 & -1 \\ 2 & 1\end{bsmallmatrix}$, $B=I$, and $D=0.1 I$, with $K$ being the LQR controller (designed with $Q=R=I$) and $t=10$.
We suppose that the decision-maker has access to 5 noise sample trajectories (the red points in~\cref{fig:rechability}) and that $u_t=0$ (so, $\mathbf{v}_{[t-1]}=0$). We select $\gamma=0.05$ and choose $J=8$ hyperplanes with $a_j=\begin{bmatrix}  i & j \end{bmatrix}^\top$ with $i,j\in\{0,\pm 1\}$ (without the trivial case $a_j=0$). We repeat our experiments for three values of $\varepsilon$. Our results are in~\cref{fig:rechability}. For low $\varepsilon$, the optimal set tightly includes the state resulting from the 5 samples trajectories but performs very poorly on unseen samples (blue crossed~\cref{fig:rechability}). A larger $\varepsilon$, instead, leads to an increasingly larger set, which performs well on test samples, so that $\varepsilon$ arbitrates between performance and robustness.

\begin{figure}[t]
    \centering
    \includegraphics[width=9cm,trim={0 1cm 0 2.8cm},clip]{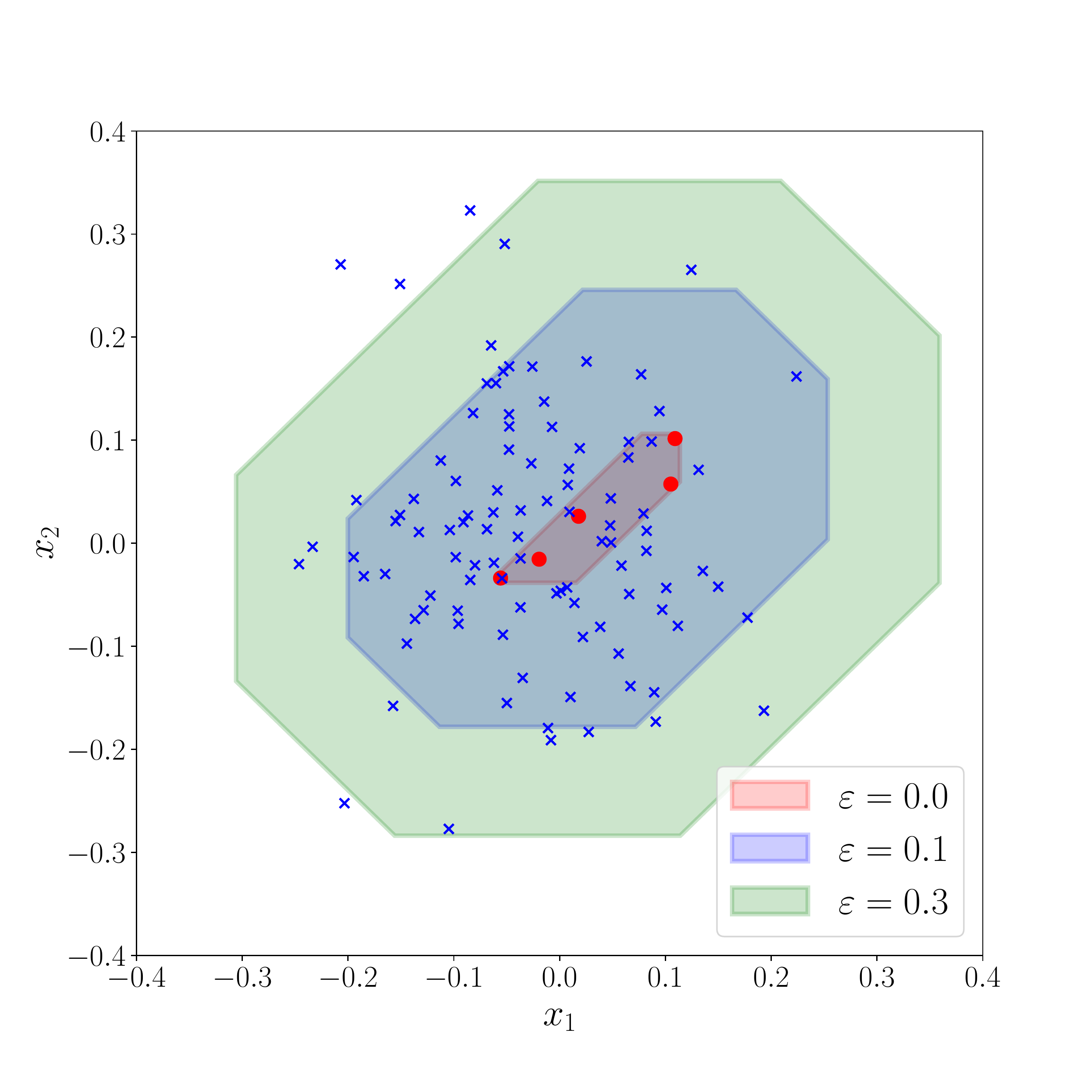}
    \caption{Set containing the state of the system at time $t$ for various radii $\varepsilon$. In red, the training samples (i.e., the samples available to the decision maker); in blue, the testing samples (i.e., samples used to test the decision of the decision-maker). The radius $\varepsilon$ arbitrates performance and robustness: smaller radii lead to smaller sets, which however perform poorly on unseen data points; larger radii result in bigger sets, which well capture unseen samples.}
    \label{fig:rechability}
    \vspace{-.3cm}
\end{figure}

\subsection{Distributionally Robust Trajectory Planning}
\label{subsec:trajectory:planning}

Our second example concerns (distributionally robust) trajectory planning. Given a deterministic initial condition, we look for the cheapest feedforward input $v_t$ steering the system to a given target, expressed in form of a polyhedral set (cf.~\eqref{eq:app:polyhedron}) Accordingly, the trajectory planning problem reads
\begin{equation*}
\begin{array}{cl}
        \min & \sum_{t=0}^{t-1} \norm{v_t}_2^2 \\
        \st & \DS \sup_{\Q \in \mathbb{S}_t(\mathbf{v}_{[t-1]})} \text{CVaR}_{1-\gamma}^{\Q}\left(\max_{j \in [J]} a_j^\top x_t + b_j\right) \leq 0.
\end{array}
\end{equation*}
\cref{prop:DR:CVaR} readily gives the convex reformulation
\begin{align*}
    \begin{array}{cl}
        \min & \DS \|\mathbf{v}_{[t-1]}\|_2^2 \\
        \st & \DS \mathbf{v}_{[t-1]} \in \Gamma_t(\mathbf{v}_{[t-1]},A,b).
    \end{array}
\end{align*}
We apply our methodology to the setting described in~\cref{subsec:reachability} and choose the set $[1,2]\times [1,2]$ as the target (grey in~\cref{fig:trajectory:planning}). As shown in \cref{fig:trajectory:planning}, the feedforward input resulting from $\varepsilon=0$ (red in~\cref{fig:trajectory:planning}) performs well on the 5 sample trajectories, steering them to the boundary of the target set, but yields poor performance on unseen samples. For larger $\varepsilon$ (blue and green in~\cref{fig:trajectory:planning}), instead, the system trajectories are successfully steered to the target set, even for unseen noise realizations, at the price of a slight increase in cost.

\begin{figure}[t]
    \centering
    \includegraphics[width=9cm,trim={0 1cm 0 2.8cm},clip]{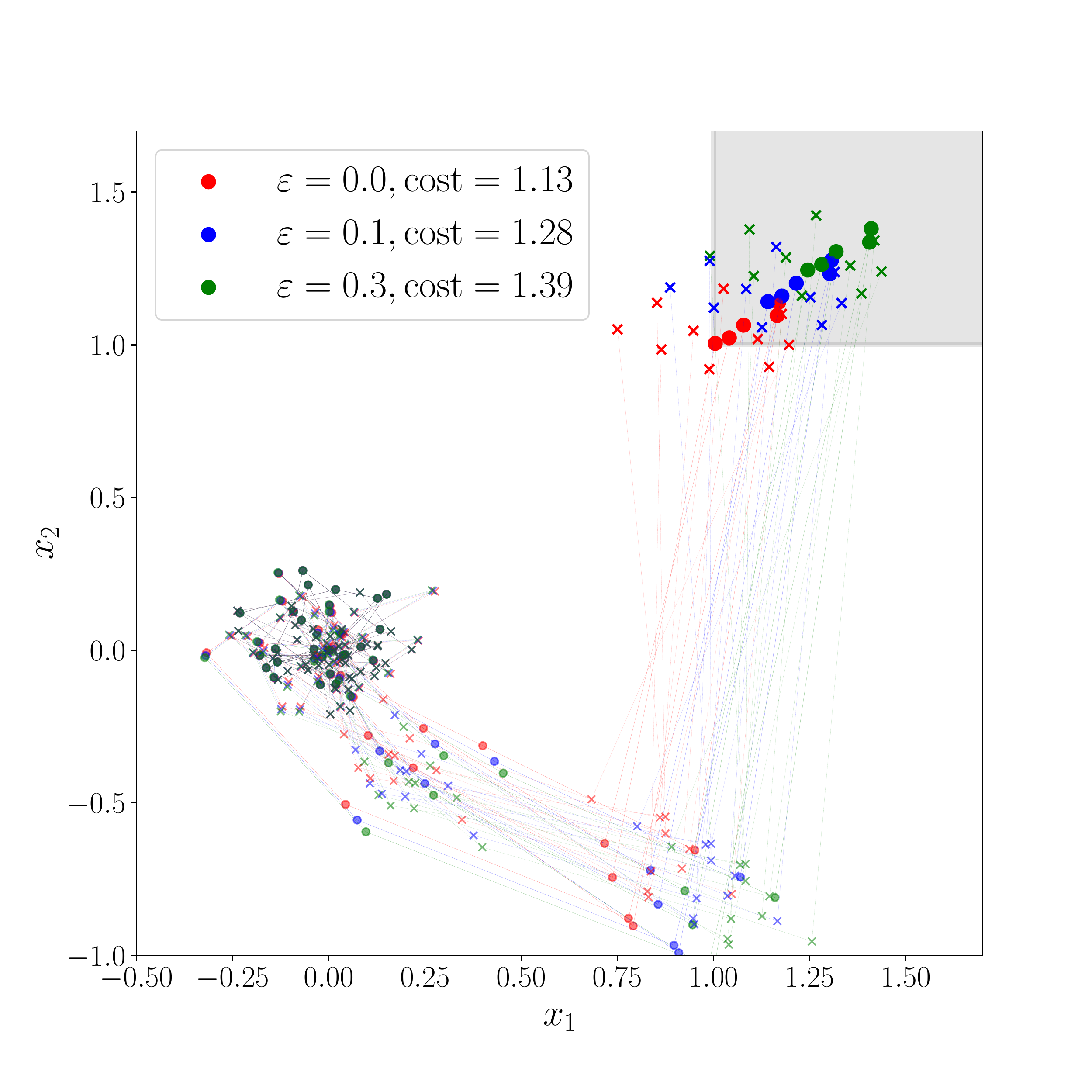}
    \caption{Distributionally robust steering of a system from the origin to a target set (in grey) for various radii $\varepsilon$. Filled circles (with solid lines) are the training samples and crosses (with dotted lines) are testing samples. Small radii lead to trajectories approaching the boundary of the target, which however might not reach the target under different noise realizations (in red); larger radii, instead, push the trajectory to the interior of the target (in green).}
    \label{fig:trajectory:planning}
    \vspace{-.3cm}
\end{figure}


\section{Future work}
\label{sec:conclusions}


A journal version of this paper, which extends these results to nonlinear transformations and to additive and multiplicative stochastic disturbances, is currently in progress.

\bibliographystyle{unsrt}
\bibliography{references}

\section*{Appendix}

\subsection{Technical Preliminaries}

\begin{lemma}[\protect{\cite[Lemma~3.3]{aolaritei2022uncertainty}}]
\label{lemma:stability:plans}
Let $\P, \Q\in\probSpace{\reals^d}$, and consider an arbitrary linear transformation $A:\reals^d \to \reals^m$. Then,
\begin{equation*}
    \pushforward{(\tensorProd{A}{A})}\setPlans{\P}{\Q}
    =\setPlans{\pushforward{A}\P}{\pushforward{A}\Q}.
\end{equation*}
\end{lemma}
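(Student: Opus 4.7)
The plan is to establish the set equality by proving the two inclusions separately, and the key technical tool will be the disintegration of probability measures along the fibers of $A$.

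For the easy inclusion $\pushforward{(\tensorProd{A}{A})}\setPlans{\P}{\Q} \subseteq \setPlans{\pushforward{A}\P}{\pushforward{A}\Q}$, I would pick an arbitrary coupling $\gamma \in \setPlans{\P}{\Q}$ and directly verify that the marginals of $(\tensorProd{A}{A})_\# \gamma$ are $\pushforward{A}\P$ and $\pushforward{A}\Q$. Using \cref{def:pushforwad} on cylinder sets of the form $B \times \reals^m$ with $B \subset \reals^m$ Borel, one computes
\begin{equation*}
    ((\tensorProd{A}{A})_\# \gamma)(B \times \reals^m)
    = \gamma(A^{-1}(B) \times \reals^d)
    = \P(A^{-1}(B))
    = (\pushforward{A}\P)(B),
\end{equation*}
and the second marginal is handled identically. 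This confirms that the pushforward belongs to $\setPlans{\pushforward{A}\P}{\pushforward{A}\Q}$.

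The reverse inclusion is the main obstacle and demands more work. Given an arbitrary $\tilde\gamma \in \setPlans{\pushforward{A}\P}{\pushforward{A}\Q}$, the task is to exhibit a lift $\gamma \in \setPlans{\P}{\Q}$ with $(\tensorProd{A}{A})_\# \gamma = \tilde\gamma$. My plan is to invoke the disintegration theorem (valid since $\reals^d$ is Polish) to write $\P = \int_{\reals^m} \P_y \, d(\pushforward{A}\P)(y)$ with each $\P_y$ concentrated on $A^{-1}(\{y\})$, and analogously $\Q = \int_{\reals^m} \Q_y \, d(\pushforward{A}\Q)(y)$. I would then define the candidate lift
\begin{equation*}
    \gamma \coloneqq \int_{\reals^m \times \reals^m} \P_{y_1} \otimes \Q_{y_2} \, d\tilde\gamma(y_1,y_2).
\end{equation*}

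The remaining step is to verify the three required properties of $\gamma$. The marginals follow by integrating out one coordinate and using that $\tilde\gamma$ has marginals $\pushforward{A}\P$ and $\pushforward{A}\Q$, which recovers the disintegration decompositions of $\P$ and $\Q$. For the pushforward, on a product set $B_1 \times B_2$ one obtains
\begin{equation*}
    ((\tensorProd{A}{A})_\# \gamma)(B_1 \times B_2)
    = \int_{\reals^m \times \reals^m} \P_{y_1}(A^{-1}(B_1))\,\Q_{y_2}(A^{-1}(B_2)) \, d\tilde\gamma(y_1,y_2),
\end{equation*}
and the fact that $\P_{y_1}$ is concentrated on $A^{-1}(\{y_1\})$ collapses $\P_{y_1}(A^{-1}(B_1))$ to $\mathds{1}_{\{y_1 \in B_1\}}$, and similarly for the second factor. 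This yields $\tilde\gamma(B_1 \times B_2)$, closing the argument. The subtlety I expect to encounter is justifying that the disintegration and the integral defining $\gamma$ behave measurably as required, but this is standard in the Polish setting and is handled once and for all by the disintegration theorem.
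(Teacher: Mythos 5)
The paper does not actually prove this lemma: it imports it verbatim from \cite[Lemma~3.3]{aolaritei2022uncertainty}, so there is no in-document proof to compare against. Your argument is correct and complete. The forward inclusion is the routine marginal computation, and the reverse inclusion---the only nontrivial direction---is correctly handled by disintegrating $\P$ and $\Q$ along the fibers of $A$ and gluing the conditionals through $\tilde\gamma$; the collapse of $\P_{y_1}(A^{-1}(B_1))$ to $\indicator{\{y_1\in B_1\}}$ is valid for $\pushforward{A}\P$-a.e.\ $y_1$ precisely because $\P_{y_1}$ is concentrated on the fiber $A^{-1}(\{y_1\})$, and the a.e.-definedness of the disintegrations is harmless since the marginals of $\tilde\gamma$ are exactly $\pushforward{A}\P$ and $\pushforward{A}\Q$. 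It is worth noting that this disintegration-and-gluing device is the same one the paper itself deploys in Step~3 of the proof of \cref{thm:lin:trans} (the construction of $\bar\Q$ and $\bar\gamma$ from $\gamma_{y_1}$ and $\P_{y_1}$), so your proof is fully consistent with the paper's toolkit.
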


\subsection{Proof of Theorem~\ref{thm:lin:trans}}
\label{appendix:proof:thm1}

\textbf{Step~1}: We first show that~\cref{thm:lin:trans} holds for invertible matrices. 

Given $A \in \reals^{d \times d}$ invertible, we start by showing that
\begin{align*}
    \transportCost{c}{\P}{\Q}
    =
    \transportCost{c\circ \inv{A}}{\pushforward{A}\P}{\pushforward{A}\Q}.
\end{align*}
This follows from Lemma~\ref{lemma:stability:plans}, as shown below:
\begin{align*}
    &\transportCost{c}{\P}{\Q}
    =
    \inf_{\gamma\in\setPlans{\P}{\Q}}\int_{\reals^d\times \reals^d}c(x_1-x_2)\d\gamma(x_1,x_2)\\
    &=
    \inf_{\gamma\in\pushforward{(\tensorProd{\inv{A}}{\inv{A}})} \setPlans{\pushforward{f}{\P}}{\pushforward{f}{\Q}}}\int_{\reals^d\times \reals^d}c(x_1-x_2)\d\gamma(x_1,x_2)\\
    &=
    \inf_{\tilde\gamma\in\setPlans{\pushforward{A}{\P}}{\pushforward{A}{\Q}}}\int_{\mathcal Y\times \mathcal Y}c(\inv{A}y_1,\inv{A}y_2)\d\tilde\gamma(y_1,y_2)\\
    &=
    \transportCost{c \circ \inv{A}}{\pushforward{A}\P}{\pushforward{A}\Q},
\end{align*}
where the second equality follows from Lemma~\ref{lemma:stability:plans}, while the other equalities follow the standard properties of pushforward and integral (see Chapter~1 in \cite{Villani2009a}).

We are now ready to prove \eqref{eq:lin:trans:surjective} for invertible matrices. We will first prove the inclusion $\pushforward{A} \ball{\varepsilon}{c}{\P} \subset \mathbb B_\varepsilon^{c \circ \inv{A}}(\pushforward{A} \P)$. Let $\mathbb Q \in \mathbb B_\varepsilon^{c} (\P)$. Then, $\pushforward{A} \mathbb Q \in \mathbb B_\varepsilon^{c \circ \inv{A}}(\pushforward{A} \P)$ follows from the following chain of equivalences
\begin{align*}
    \mathbb Q \in \mathbb B_\varepsilon^{c} (\P) 
    &\iff
    \transportCost{c}{\P}{\Q} \leq \varepsilon \\
    &\iff
    \transportCost{c \circ \inv{A}}{\pushforward{A}\P}{\pushforward{A}\Q}\leq \varepsilon \\
    &\iff 
    \pushforward{A} \mathbb Q \in \mathbb B_\varepsilon^{c \circ \inv{A}}(\pushforward{A} \P).
\end{align*}
We will now prove the converse inclusion, i.e., $\mathbb B_\varepsilon^{c \circ \inv{A}}(\pushforward{A} \P) \subset \pushforward{A} \ball{\varepsilon}{c}{\P}$. This follows from 
\begin{align*}
    \ball{\varepsilon}{c \circ \inv{A}}{\pushforward{A} \P}
    &= 
    \pushforward{(A \circ \inv{A})} \ball{\varepsilon}{c \circ \inv{A}}{\pushforward{A} \P}\\
    &= 
    \pushforward{A} \pushforward{\inv{A}} \ball{\varepsilon}{c \circ \inv{A}}{\pushforward{A} \P}
    \\
    &\subset 
    \pushforward{A} \ball{\varepsilon}{c}{\pushforward{\inv{A}} \pushforward{A} \P}
    \\
    &= 
    \pushforward{A} \ball{\varepsilon}{c}{\P},
\end{align*}
where the inclusion follows using the same reasoning as in the above chain of equivalences. This concludes the proof of \eqref{eq:lin:trans:surjective} for invertible matrices.

\smallskip

\textbf{Step~2}: We will now prove the inclusion \eqref{eq:lin:trans:arbitrary} for arbitrary matrices $A \in \reals^{m \times d}$. 

Let $\Q \in \ball{\varepsilon}{c}{\P}$. Then, $\pushforward{A} \Q \in \mathbb B_\varepsilon^{c \circ \pinv{A}}(\pushforward{A}\P)$ can be shown as follows:
\begin{align*}
    &\inf_{\gamma\in\setPlans{\pushforward{A}\P}{\pushforward{A}\Q}}
    \int_{\reals^m\times \reals^m}c(\pinv{A} y_1-\pinv{A} y_2)\d\gamma(y_1,y_2)
    \\ &=
    \inf_{\gamma\in \pushforward{(A \times A)}\setPlans{\P}{\Q}}
    \int_{\reals^m\times \reals^m}c(\pinv{A} y_1-\pinv{A} y_2)\d\gamma(y_1,y_2)
    \\ &=
    \inf_{\tilde{\gamma}\in \setPlans{\P}{\Q}}\int_{\reals^d\times \reals^d}c(\pinv{A}A x_1-\pinv{A}A x_2)\d\tilde{\gamma}(x_1,x_2)
    \\ &\leq
    \inf_{\tilde{\gamma}\in \setPlans{\P}{\Q}}\int_{\reals^d\times \reals^d}c(x_1- x_2)\d\tilde{\gamma}(x_1,x_2) \leq \varepsilon,
\end{align*}
where the first equality follows from Lemma~\ref{lemma:stability:plans}, and the second to last inequality follows from the orthomonotonicity of $c$ and the fact that $\pinv{A}A$ is the orthogonal projector onto $\kernel(A)^\perp$, i.e.,
\begin{align*}
    c(x_1- x_2) &= c(\pinv{A} A (x_1- x_2) + (\eye_{d\times d} - \pinv{A}A)(x_1- x_2)) \\ &\geq c(\pinv{A}A (x_1- x_2)).
\end{align*}

\smallskip

\textbf{Step~3}: We now focus on the full row-rank case \eqref{eq:lin:trans:surjective}, and show that
\begin{align*}
    \pushforward{A}\ball{\varepsilon}{c}{\P} 
    = 
    \ball{\varepsilon}{c \circ \pinv{A}}{\pushforward{A}\P}.
\end{align*}

Without loss of generality, we can restrict our attention to the case where $A$ is the projection on the first $m$ coordinates, i.e., $A = \pi_{1}:\reals^d\to\reals^m$, with $\pi_{1}(y,z)=y$, for $y \in \reals^m$ and $z \in \reals^{d-m}$. Indeed, assume~\eqref{eq:lin:trans:surjective} holds for the transformation $\pi_{1}$, and let $S\in\reals^{d\times d}$ be the full rank matrix satisfying 
\begin{align*}
    A = \begin{bmatrix}
        \eye_{m \times m} & 0_{m \times (d-m)}
    \end{bmatrix} \begin{bmatrix}
        A \\ B
    \end{bmatrix}
    =
    \pi_1 S,
\end{align*}
for some full row-rank matrix $B \in \reals^{(d-m) \times d}$, with rows linearly independent and orthogonal to the rows of $A$. Then, 
\begin{equation*}
\begin{aligned}
    \pushforward{A}\ball{\varepsilon}{c}{\P}
    =
    \pushforward{(\pi_1S)}\ball{\varepsilon}{c}{\P}
    &=
    \pushforward{\pi_1}
    \ball{\varepsilon}{c \circ S^{-1}}{\pushforward{S}\P}
    \\
    &=
    \ball{\varepsilon}{c \circ S^{-1} \circ \pinv{\pi_1}}{\pushforward{(\pi_1S)}\P}
    \\
    &=
    \ball{\varepsilon}{c \circ \pinv{A}}{\pushforward{A}\P},
\end{aligned}
\end{equation*}
where we have used the fact that $S$ is an invertible transformation and that $\pinv{A} = S^{-1} \pinv{\pi_1}$. In particular, the latter fact follows from the fact that $\pinv{\pi_1} = [\eye_{m \times m} \;\; 0_{m \times (d-m)} \T]$ and
\begin{align*}
    S^{-1}
    &=
    \T S \inv{(S \T S)}\\
    &=
    \begin{bmatrix}\T A & \T B \end{bmatrix} \begin{bmatrix} \inv{(A \T A)} & 0_{m\times (d-m)} \\ 0_{(d-m)\times m} & \inv{(B \T B)}\end{bmatrix} = \begin{bmatrix}\pinv{A} & \pinv{B} \end{bmatrix}.
\end{align*}
In virtue of \eqref{eq:lin:trans:arbitrary} it suffices to prove that
\begin{equation*}
    \ball{\varepsilon}{c \circ \pinv{\pi_1}}{\pushforward{\pi_1}\P}
    \subset
    \pushforward{\pi_1}\ball{\varepsilon}{c}{\P}.
\end{equation*}
Let $\Q\in\ball{\varepsilon}{c\circ\pinv{\pi_1}}{\pushforward{\pi_1}\P}$. Moreover, let $\gamma\in\setPlans{\pushforward{\pi_1}\P}{\Q}$ be the optimal coupling, satisfying 
\begin{equation*}
    \int_{\reals^m\times\reals^m}c(\pinv{\pi_1}y_1-\pinv{\pi_1}y_2)\d\gamma(y_1,y_2)\leq\varepsilon.
\end{equation*}
In the following, in order to avoid confusion, we define by $x$, $x_i$ points in $\reals^d$, by $y$, $y_i$ points in $\reals^m$, and by $z$, $z_i$ points in $\reals^{d-m}$. By the Disintegration Theorem, 
there exists a $({\pi_1}_\# \P)$-almost everywhere uniquely determined family of probability distributions $\{(\P_{y})\}_{y\in\reals^m}$ on $\reals^{d-m}$, such that 
\begin{equation*}
    \d \P(y,z) = \d(\pushforward{\pi_1}\P)(y) \otimes \d\P_{y}(z).
\end{equation*}
Moreover, there exists a $({\pi_1}_\# \P)$-almost everywhere uniquely determined family of probability distributions $\{(\gamma_{y_1})\}_{y_1\in\reals^m}$ on $\reals^{m}$, such that
\begin{equation*}
    \d\gamma(y_1,y_2) = \d(\pushforward{\pi_1}\P)(y_1) \otimes \d\gamma_{y_1}(y_2).
\end{equation*}
Consider now the probability distribution $\bar\Q$ on $\reals^d$:
\begin{equation*}
\begin{aligned}
    \d\bar\Q(y_1,z_1) \coloneqq \int_{\reals^m} \d(\gamma_{y_2} \otimes \P_{y_2})(y_1,z_1) \,\d(\pushforward{\pi_1}\P)(y_2).
\end{aligned} 
\end{equation*}
In the following, we will show that $\pushforward{\pi_1} \bar\Q = \Q$, and that $\bar\Q \in \ball{\varepsilon}{c}{\P}$; this is enough to conclude the proof. For any Borel and bounded test function $\phi:\reals^m \to \reals$, we have
\begin{equation*}
\begin{aligned}
    \int_{\reals^d}
    &\phi(y_2)\d\bar\Q(y_2,z_2)\\
    &=
    \int_{\reals^d \times \reals^m}
    \phi(y_2)\d(\gamma_{y_1} \otimes \P_{y_1})(y_2,z_2) \,\d(\pushforward{\pi_1}\P)(y_1)
    \\ &=
    \int_{\reals^m \times \reals^m}
    \phi(y_2)\d\gamma_{y_1}(y_2) \,\d(\pushforward{\pi_1}\P)(y_1)
    \\ &=
    \int_{\reals^m \times \reals^m}
    \phi(y_2)\d\gamma(y_1,y_2)
    \\ &=
    \int_{\reals^m}
    \phi(y_2)\d\Q(y_2),
\end{aligned}
\end{equation*}
showing that $\pushforward{\pi_1} \bar\Q = \Q$. 

In the rest of the proof, we will show that $\bar\Q \in \ball{\varepsilon}{c}{\P}$. For this, we first define the coupling
\begin{align*}
    \d\bar\gamma&(y_1,z_1,y_2,z_2)
    \coloneqq
    (\pi_{y_1} \times \pi_{z_1} \times \pi_{y_2} \times \pi_{z_2})_\# \\
    &\left(\d{\pi_1}_\# \P(y_1) \otimes \left((\Id \times \Id)_\#\d\P_{y_1}\right)(z_1,z_2) \otimes \d\gamma_{y_1}(y_2)\right).
\end{align*}
By choosing the test function $\phi(y_1,z_1,y_2,z_2)=\norm{z_1-z_2}$, it can be easily seen that $z_1=z_2$ $\bar\gamma$-almost everywhere. Moreover, by construction, $\bar \gamma\in\setPlans{\P}{\bar\Q}$. Indeed, for any Borel and bounded test function $\phi:\reals^d \to \reals$, we have
\begin{equation*}
\begin{aligned}
    \int_{\reals^d\times\reals^d}&\phi(y_1,z_1)\d\bar\gamma(y_1,z_1,y_2,z_2)\\
    &=
    \int_{\reals^d}\phi(y_1,z_1) \d({\pi_1}_\# \P \otimes\P_{y_1})(y_1, z_1)\\
    &=
    \int_{\reals^d}\phi(y_1,z_1)\d\P(y_1,z_1),
\end{aligned}
\end{equation*}
showing that the first marginal of $\bar\gamma$ is $\P$. Moreover, for any Borel and bounded test function $\phi:\reals^d \to \reals$, we have 
\begin{equation*}
\begin{aligned}
    &\int_{\reals^d\times\reals^d}\phi(y_2,z_2)\d\bar\gamma(y_1,z_1,y_2,z_2)\\
    &=
    \int_{\reals^d\times\reals^m}\phi(y_2,z_2)\d(\gamma_{y_1} \otimes \d\P_{y_1})(y_2,z_2)\d(\pushforward{\pi_1}\P)(y_1)
    \\
    &=
    \int_{\reals^d}\phi(y_2,z_2)\int_{\reals^m}\d(\gamma_{y_1} \otimes \d\P_{y_1})(y_2,z_2)\d(\pushforward{\pi_1}\P)(y_1)
    \\
    &=
    \int_{\reals^d}\phi(y_2,z_2)\d\bar\Q(y_2,z_2),
\end{aligned} 
\end{equation*}
showing that the second marginal of $\bar\gamma$ is $\bar \Q$. Finally, $\bar\Q \in \ball{\varepsilon}{c}{\P}$ follows from
\begin{equation*}
\begin{aligned}
    \transportCost{c}{\P}{\bar\Q}
    &\leq
    \int_{\reals^d\times\reals^d}c((y_1,z_1)-(y_2,z_2))\d\bar\gamma(y_1,z_1,y_2,z_2)
    \\
    &=
    \int_{\reals^d\times\reals^d}c((y_1-y_2,0))\d\bar\gamma(y_1,z_1,y_2,z_2)
    \\
    &=
    \int_{\reals^m\times\reals^m}c((y_1-y_2,0))\d(\pushforward{\pi_1}\P \otimes \gamma_{y_1})(y_1,y_2)
    \\
    &=
    \int_{\reals^m\times\reals^m}c(\pinv{\pi_1}y_1-\pinv{\pi_1}y_2)\d\gamma(y_1,y_2)
    \leq 
    \varepsilon.
\end{aligned}
\end{equation*}
This concludes the proof of the inclusion $\ball{\varepsilon}{c \circ \pinv{\pi_1}}{\pushforward{\pi_1}\P} \subset \pushforward{\pi_1}\ball{\varepsilon}{c}{\P}$, and, with it, the proof of Theorem~\ref{thm:lin:trans}.

\subsection{Proof of Lemma~\ref{lemma:conc:ineq:noise}}
\label{appendix:proof:lemma2}

Let $\gamma \in \setPlans{\P}{\empiricalP{}}$ be the optimal coupling associated to the transportation cost $c$, and satisfying
\begin{align*}
    \int_{\reals^d \times \reals^d} \|x_i-y_i\|_2^2 \d\gamma(x_i,y_i) 
    \leq 
    \varepsilon
\end{align*}
with probability $1-\delta$. It is easy to see that the product distribution $\otimes_{i=1}^t \gamma$ belongs to the set of coupling $\setPlans{\P^{\otimes t}}{\empiricalP{}^{\otimes t}}$. Then, we have that the following holds with probability $1-\delta$:
\begin{align*}
    \inf_{\gamma \in \setPlans{\P^{\otimes t}}{\empiricalP{ }^{\otimes t}}} &\int_{\reals^{td} \times \reals^{td}} \|x-y\|_2^2 \d\gamma(x,y) \\
    &\leq 
    \int_{\reals^{td} \times \reals^{td}} \|x-y\|_2^2 \d(\otimes_{i=1}^t \gamma)(x,y)
    \\ &=
    \int_{\reals^d \times \reals^d} \sum_{i=1}^t \|\entry{x}{i}-\entry{y}{i}\|_2^2 \d(\otimes_{i=1}^t \gamma)(x,y)
    \\ &=
    \sum_{i=1}^t\int_{\reals^d \times \reals^d} \|\entry{x}{i}-\entry{y}{i}\|_2^2 \d\gamma(\entry{x}{i},\entry{y}{i})
    \\ &\leq
    t \varepsilon,
\end{align*}
From this we can conclude that $\P^{\otimes t} \in \ball{t \varepsilon}{\|\cdot\|_2^2}{\empiricalP{}^{\otimes t}}$ with probability $1-\delta$. This concludes the proof of Lemma~\ref{lemma:conc:ineq:noise}.

\subsection{Proof of Proposition~\ref{prop:ambiguity:set:state}}
\label{appendix:proof:prop1}

By~\cref{lemma:conc:ineq:noise}, the OT ambiguity set $\ball{t \varepsilon}{\|\cdot\|_2^2}{\empiricalP{}^{\otimes t}}$ captures the distributional uncertainty of the noise trajectory $\mathbf{w}_{[t-1]}$. Moreover, since the transportation cost $\|\cdot\|_2^2$ satisfies~\cref{assump:transportation:cost},~\cref{thm:lin:trans} establishes that the distributional uncertainty of the term $\mathbf{D}_{t-1} \mathbf{w}_{[t-1]}$ is captured by $\ball{t \varepsilon}{\|\cdot\|_2^2 \circ \pinv{\mathbf{D}_{t-1}}}{\pushforward{\mathbf{D}_{t-1}}{\empiricalP{}^{\otimes t}}}$. We now need to consider the sum with the deterministic term $(A+BK)^t x_0 + \mathbf{B}_{t-1} \mathbf{v}_{[t-1]}$. This is a simple translation, corresponding to a convolution at the level of the distributions (see Definition~\ref{def:convolution}). Using \cite[Corollary~3.16]{aolaritei2022uncertainty}, this leads to the following OT ambiguity set
\begin{align*}
    \ball{t \varepsilon}{\|\cdot\|_2^2\circ \pinv{\mathbf{D}_{t-1}}}{\delta_{(A+BK)^t x_0 + \mathbf{B}_{t-1} \mathbf{v}_{[t-1]}}\ast(\pushforward{\mathbf{D}_{t-1}}{\empiricalP{}^{\otimes t}})}.
\end{align*}
This concludes the proof of Proposition~\ref{prop:ambiguity:set:state}.

\end{document}